\renewcommand*{\backref}[1]{}
\renewcommand*{\backrefalt}[4]{\quad \tiny 
    \ifcase #1 (\textbf{NOT CITED.})%
    \or        (Cited on page~#2.)%
    \else      (Cited on pages~#2.)%
    \fi}
\def\myMRbibitem{\@ifnextchar[\my@lbibitem\my@bibitem}
\def\mybiblabel#1#2{\@biblabel{{\hyperref{http://www.ams.org/mathscinet-getitem?mr=#1}{}{}{#2}}}}
\def\myhyperanchor#1{\Hy@raisedlink{\hyper@anchorstart{cite.#1}\hyper@anchorend}}
\def\my@lbibitem[#1]#2#3#4\par{%
    \item[\mybiblabel{#2}{#1}\myhyperanchor{#3}\hfill]#4%
    \@ifundefined{ifbackrefparscan}{}{\BR@backref{#3}}%
    \if@filesw{\let\protect\noexpand\immediate
       \write\@auxout{\string\bibcite{#3}{#1}}}\fi\ignorespaces%
}
\def\my@bibitem#1#2#3\par{%
    \refstepcounter\@listctr
    \item[\mybiblabel{#1}{\the\value\@listctr}\myhyperanchor{#2}\hfill]#3%
    \@ifundefined{ifbackrefparscan}{}{\BR@backref{#2}}%
    \if@filesw\immediate\write\@auxout
        {\string\bibcite{#2}{\the\value\@listctr}}\fi\ignorespaces%
}
\declaretheorem{theorem}
\declaretheorem[numberwithin=section, name=Theorem]{otherthm}
\declaretheorem[sibling=otherthm]{lemma}
\declaretheorem[sibling=otherthm]{corollary}
\declaretheorem[sibling=otherthm]{proposition}
\declaretheorem[sibling=otherthm]{scholium}
\declaretheorem[sibling=otherthm,style=remark]{fact}
\declaretheorem[numbered=no, style=remark]{remark}
\declaretheorem[numbered=no, style=remark, name=Acknowledgement]{ack}
\numberwithin{equation}{section}         
\setlist[enumerate,1]{label={\upshape(\alph*)},ref=\alph*}
\setlist[enumerate,2]{label={\upshape(\arabic*)},ref=\arabic*}
\newcommand{\R}{\mathbb{R}}
\newcommand{\Z}{\mathbb{Z}}
\newcommand{\N}{\mathbb{N}}
\newcommand{\E}{\mathbb{E}}
\newcommand{\F}{\mathbb{F}}
\newcommand{\G}{\mathbb{G}}
\newcommand{\cC}{\mathcal{C}}
\newcommand{\cD}{\mathcal{D}}
\newcommand{\cI}{\mathcal{I}}
\newcommand{\cL}{\mathcal{L}}
\newcommand{\cM}{\mathcal{M}}
\newcommand{\cV}{\mathcal{V}}
\renewcommand{\epsilon}{\varepsilon}
\renewcommand{\phi}{\varphi}
\renewcommand{\emptyset}{\varnothing}
\renewcommand{\setminus}{\smallsetminus}
\newcommand{\id}{\mathrm{id}}
\newcommand{\SL}{\mathrm{SL}}
\newcommand{\GL}{\mathrm{GL}}
\newcommand{\SO}{\mathrm{SO}}
\renewcommand{\O}{\mathrm{O}}
\newcommand{\Gr}{\mathrm{Gr}}
\newcommand{\Aut}{\mathrm{Aut}}
\newcommand{\Sec}{\mathrm{Sec}}
\newcommand{\Diff}{\mathrm{Diff}}
\newcommand{\Homeo}{\mathrm{Homeo}}
\DeclareMathOperator{\interior}{int}
\newcommand{\widecup}{\enspace{\cup}\enspace}
\newcommand{\widecap}{\enspace{\cap}\enspace}
\newcommand{\transverse}{\;\;\makebox[0pt]{$\top$}\makebox[0pt]{\small $\cap$}\;\;}
\DeclareMathOperator{\Ker}{Ker}
\renewcommand{\Im}{\mathop{\mathrm{Im}}}
\DeclareMathOperator{\sgn}{sgn}
\newcommand{\bundle}[4]{{#4} \hookrightarrow {#1} \overset{{#3}}{\to} {#2}}
\newcommand{\arxiv}[1]{Preprint \href{http://arxiv.org/abs/#1}{arXiv:{#1}}}
\begin{document}

\title{Cocycles of isometries and denseness of domination}
\author{Jairo Bochi}
\address{Facultad de Matem\'aticas, Pontificia Universidad Cat\'olica de Chile}
\urladdr{\href{http://www.mat.uc.cl/~jairo.bochi}{www.mat.uc.cl/$\sim$jairo.bochi}}
\email{\href{mailto:jairo.bochi@gmail.com}{jairo.bochi@gmail.com}}
\date{August 26, 2014}

\begin{thanks}
{The author was partially supported by project Fondecyt 1140202 and by the Center of Dynamical Systems and Related Fields ACT1103.}
\end{thanks}

\begin{abstract}
We consider the problem of approximating a linear cocycle (or, more generally, a vector bundle automorphism) over a fixed base dynamics by another cocycle admitting a dominated splitting. We prove that  the possibility of doing so depends only on the homotopy class of the cocycle, provided that the base dynamics is a minimal diffeomorphism and the fiber dimension is least $3$. This result is obtained by means of a general theorem on the existence of almost invariant sections for fiberwise isometries of bundles with compact fibers and finite fundamental group. The main novelty of the proofs is the use of a quantitative homotopy result due to Calder, Siegel, and Williams.
\end{abstract}

\maketitle


\section{Introduction}

\subsection{A dynamical interplay}

This paper deals with the dynamics of certain classes of fiber bundle automorphisms.
In particular, these include skew-products $g(x,y) = (f(x),g_x(y))$ 
acting on trivial bundles $X \times Y$.
If the map $x \mapsto g_x$ takes values in a specific group $G$
of transformations of the fiber $Y$, it is called a \emph{$G$-cocycle}.

The first class we consider consists of vector bundle automorphisms,
which in particular include linear cocycles.
To investigate them, it is often useful to consider 
induced automorphisms on other (not necessarily linear) fiber bundles.
The simplest example is the projectivization of a vector bundle automorphism.
A related linearly-induced automorphism of a fiber bundle with $\SO(m)$ fibers
was used in an essential way by V.I.~Oseledets in the proof of his celebrated theorem: 
see \cite[p.~229]{Oseledets}.
Quoting \cite{Selgrade}, compactness of the fibers 
``allows the use of techniques not available for the vector bundle''.

Nevertheless, it is also useful to consider linearly-induced automorphisms 
on bundles with non-compact fibers, especially if these have some extra structure.
For example, 
a $\SL(2,\R)$-cocycle induces a cocycle of M\"obius transformations of the complex half-plane,
which are isometries with respect to the hyperbolic metric,
and many linear-algebraic properties of the former cocycle
can be understood in terms of geometric properties of the latter.
A far reaching extension of this interplay
is revealed by the Karlsson--Margulis theorem \cite{KM}
on cocycles of isometries of spaces of nonpositive curvature,
which yields Oseledets theorem as a corollary. 
The remarkable generality and simplicity of the Karlsson--Margulis theorem 
have instant appeal and justify the study of cocycles of isometries 
for its own sake.

\medskip

The notion of \emph{dominated splittings} 
is central to the dynamics of vector bundle automorphisms
and is a major motivation for this paper.
It basically consists on a projective form of hyperbolicity,
and it is equivalent to ordinary uniform hyperbolicity in the case 
of $\SL(2,\R)$-cocycles.
The term ``domination'' was coined by R.~Ma\~{n}\'{e} in the 1970's,
although the concept was actually introduced earlier
in differential equations theory under the name ``exponential separation'': see \cite{Sambarino,Palmer} and references therein. 
Dominated splittings are intrinsically related to chain recurrence properties of the 
induced projectivized automorphism \cite{Selgrade,CK},
and can also be characterized in terms of separation between singular values \cite{BG,Morris}.
Since Ma\~{n}\'{e}, dominated splittings continue to play a important role 
in differentiable dynamics on compact manifolds: see \cite{BDV,Sambarino}.

The issue we are concerned with here is denseness of domination: 
when can a given vector bundle automorphism 
be approximated by another having a dominated splitting?
Under reasonable assumptions, 
we reduce this question to a problem about 
the existence of almost invariant sections 
for fiberwise isometries, which we them solve in a much greater generality.
That general result is the core of this paper. 
It turns out to have other applications: 
we use it to characterize almost coboundaries on compact Lie groups with finite center.

\medskip

Other very general constructions of invariant and almost invariant sections
for cocycles of isometries appear in the works \cite{CNP,BN_geometric};
these rely on nonpositive curvature and are highly geometrical.
By contrast, the isometries considered in this paper act on compact fibers,
whose geometries are less favorable: 
for example, shortest geodesics between pairs of points are not necessarily unique.
Actually the arguments developed here are much more topological than geometrical,
and use as a crucial ingredient beautiful results on \emph{quantitative homotopy}
by Calder, Siegel, and Williams \cite{CS80,SW89}.

Let us proceed with precise statements.

\subsection{Domination and the problem of denseness}

Let $X$ be a compact Hausdorff space.
Let $m \ge 2$ be an integer, and let $\E$ be a $m$-plane bundle over $X$,
that is, a real vector bundle with base space $X$
and fibers of dimension $m$.
We endow $\E$ with a Riemannian norm.

If  $f \colon X \to X$ is a homeomorphism, we let 
$\Aut(\E,f)$ denote the space of automorphisms of $\E$ fibering over $f$,
endowed with the uniform (i.e.\ $C^0$) topology.

When the vector bundle is trivial, 
that is $\E = X \times \R^m$, there is an identification $\Aut(\E,f) = C(X,\GL(m,\R))$;
indeed every automorphism is of the form $(x,v) \mapsto (f(x),A(x)v)$ for some 
continuous map $A \colon X \to \GL(m,\R)$,
which is called a \emph{linear cocycle}.

\medskip

Consider a splitting $\E = \E^1 \oplus \E^2 \oplus \cdots \oplus \E^k$ of the bundle $\E$ as a sum of proper nontrivial subbundles $\E^i$.
This splitting is called \emph{dominated} with respect to an automorphism $A \in \Aut(\E,f)$
if each subbundle $\E^i$ is $A$-invariant
and moreover there is a constant integer $\ell \in \N$ such that
for all $x \in X$, all $i \in \{1,\dots,k-1\}$, and all unit vectors 
$v_i \in \E^i_x$, $v_{i+1} \in \E^{i+1}_x$, we have
$$
\| A^\ell (v_i) \| > \|A^\ell (v_{i+1})\| \, .
$$
That is, up to replacing $A$ by a power, 
any vector in $\E^i_x$ is relatively more expanded than any vector in $\E^{i+1}_x$.
We also say that $\E^i$ \emph{dominates} $\E^{i+1}$.

\medskip

Let us consider the base dynamics $f$ as fixed.
An important feature of domination is \emph{openness}:
the set of automorphisms admitting a dominated splitting is
open in $\Aut(\E,f)$.
On the other hand, domination is not dense in general. 
If $f$ has a periodic point $x$ of period $p$ such that 
the restriction of the power $A^p$ to the fiber $\E_x$
has exactly two eigenvalues of maximum absolute value, and these eigenvalues are non-real, then the automorphism $A$ cannot admit a dominated splitting whose top subbundle $\E^1$ is one-dimensional. Such a condition is open in $\Aut(\E,f)$.
With this kind of reasoning we can exhibit nonempty open subsets of $\Aut(\E,f)$
formed by automorphisms that admit no dominated splitting at all,
provided $f$ has sufficiently many periodic points.

Different obstructions to domination
may be due to topological reasons:
sometimes the homotopy type of $A$ forbids
the existence of an invariant splitting, and in particular, of a dominated one.
(See \S~\ref{ss.two_types} for an example.)

Suppose that the base dynamics $f$ is minimal (and the base space is infinite), and so periodic orbit obstructions do not arise. Our first main result basically states that all robust obstructions to domination are topological, provided the (linear) dimension is at least $3$. 
The precise statement is as follows:

\begin{theorem}\label{t.densedom}
Let $f \colon X \to X$ be a minimal diffeomorphism of 
a compact manifold $X$ of positive dimension.
Let $\E$ be a $m$-plane bundle over $X$, where $m \ge 3$.
Then for each fibered homotopy class $\cC \subset \Aut(\E,f)$,
\begin{enumerate}
\item\label{i.obstruction}
either no automorphism in $\cC$ has a proper nontrivial invariant subbundle;
\item\label{i.densedom}
or there is an open and dense subset $\cD \subset \cC$ such that all automorphisms in $\cD$ have a dominated splitting.
\end{enumerate}
\end{theorem}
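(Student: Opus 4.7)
The plan is to prove the contrapositive of the dichotomy: if some $A_0 \in \cC$ admits a proper nontrivial invariant subbundle of rank $k$, then the automorphisms in $\cC$ with a dominated splitting form an open and dense subset. Openness of the dominated-splitting property in $\Aut(\E,f)$ is classical, so only density needs to be proved. I fix $A \in \cC$ and $\epsilon > 0$ and aim to produce $\tilde A \in \Aut(\E,f)$, fiberwise homotopic to $A$ and uniformly $\epsilon$-close to it, carrying a dominated splitting.

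The first step is purely topological: since $A$ and $A_0$ are fiberwise homotopic, the Grassmannian bundle $\Gr_k(\E) \to X$ admits a continuous section $\cF$ lying in the same homotopy class as the $A_0$-invariant section. The image $A\cF$ is automatically homotopic to $\cF$, so the task reduces to pushing $\cF$, via a small perturbation of $A$, to something truly invariant in its prescribed homotopy class.

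The second step is the heart of the argument: reducing to the setting of fiberwise isometries so that the paper's main almost-invariant-section theorem can be applied. I would replace the ambient metric on $\E$ by an equivalent Riemannian metric built from $\cF$ and a chosen orthogonal complement, and then perform a Gram--Schmidt/polar-decomposition cleanup on $A$ along $\cF$, so that on the associated compact-fibered bundle the resulting map is close to a fiberwise isometry. The model fiber $\Gr_k(\R^m)$ is compact with finite fundamental group ($\Z/2$ when $k \in \{1,m-1\}$, trivial otherwise), which matches the hypotheses alluded to in the abstract. Invoking the almost-invariant-section theorem produces a section $\cF'$, still in the homotopy class of $\cF$, that is $A$-almost-invariant; a further $O(\epsilon)$ perturbation of $A$ inside $\cC$ then makes $\cF'$ genuinely invariant. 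To upgrade invariance to domination, I would use minimality of $f$: selecting a fundamental domain $U \subset X$ and inserting a compactly supported bump perturbation amplifying vectors in $\cF'$ relative to a fixed complement by a factor $1+\delta$ on each pass through $U$ produces, by recurrence, a uniform expansion ratio. An induction on rank, splitting $\cF'$ and its complement separately, yields the finest dominated decomposition compatible with the homotopy data.

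The main obstacle I expect is precisely the second step: bridging between an arbitrary vector bundle automorphism and the isometric setting while preserving the homotopy class of $\cF$, and doing so with quantitative control on perturbation size coming from the Calder--Siegel--Williams estimates. The hypothesis $m \geq 3$ enters exactly here: the quantitative homotopy lemmas require enough ambient dimension relative to the degree of the section being built, and the Grassmannians have suitable low-dimensional homotopy only in this range. The excluded case $m = 2$ corresponds to the $\SL(2,\R)$ regime where domination coincides with uniform hyperbolicity and the dichotomy is known to behave differently, reflecting genuine geometric rather than topological obstructions.
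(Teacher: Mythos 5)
Your overall architecture (pass to the Grassmannian bundle $G_k(\E)$, note its fibers are compact with finite fundamental group, produce an almost invariant section in the right homotopy class, then perturb to make it invariant and boost it into a dominated splitting) matches the paper, and your final "amplify along passes through a small set" step would work just as well as the paper's global $e^{\pm 1/i}$ boost. But there is a genuine gap at what you yourself identify as the heart of the argument: the reduction to the fiberwise-isometry setting. A "Gram--Schmidt/polar-decomposition cleanup along $\cF$" cannot make an arbitrary automorphism close to a fiberwise isometry on $G_k(\E)$ by a \emph{small} perturbation: replacing $A$ by its orthogonal part is a drastic change (it erases all singular-value asymmetry), and if $A$ genuinely expands some directions much more than others, the induced Grassmannian map has attracting/repelling behavior no matter which equivalent metric you choose. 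The missing ingredient is the dichotomy of Bochi--Navas (Theorem~\ref{t.BN} in the paper, built on the fact that \emph{absence of domination} allows Lyapunov exponents to be mixed by small perturbations): densely in $\Aut(\E,f)$ one either already has a dominated splitting (and is done) or can perturb to an automorphism that is \emph{conformal} with respect to some Riemannian metric, and conformality is exactly what makes $\bar A$ a fiberwise isometry of $G_k(\E)$. Without this step your invocation of Theorem~\ref{t.sections} is not justified; moreover that theorem requires an exact fiberwise isometry, not a map merely "close to" one, and \S 5 of the paper gives an explicit example (a north--south map on $S^2$ fibers) showing the conclusion fails without the isometry hypothesis.

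Two smaller inaccuracies: $\pi_1(\Gr(k,m))$ is $\Z_2$ for all $1\le k\le m-1$ when $m\ge 3$ (not trivial for $2\le k\le m-2$), though finiteness is all that is needed; and the role of $m\ge 3$ is not a dimension-versus-degree constraint in the quantitative homotopy estimates, but precisely that for $m=2$ the fiber $\Gr(1,2)\cong S^1$ has infinite fundamental group, so the Calder--Siegel--Williams bound (and Theorem~\ref{t.sections}) is unavailable and rotation-number obstructions appear.
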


Here the \emph{fibered homotopy class} of an automorphism is its path-connected component
in $\Aut(\E,f)$; the corresponding paths are called \emph{fibered homotopies}.
So the theorem states that if an automorphism $A$ is fibered homotopic to another 
having a nontrivial continuous invariant field of planes
then a perturbation of $A$ has a dominated splitting.
In particular, domination is either empty or dense 
inside each fibered homotopy class.

Notice that Theorem~\ref{t.densedom} requires $f$ to be a diffeomorphism.
(In this paper, we assume all manifolds to be $C^\infty$, without boundary, and paracompact,
and all diffeomorphisms to be $C^\infty$.)
Although this assumption should be stronger than necessary,
it is technically very convenient for certain parts of the construction 
(especially those in Appendix~\ref{s.appendix}),
and so we have not tried to optimize it.

\medskip

More information about 
the classes $\cC$ of type~(\ref{i.obstruction}) in Theorem~\ref{t.densedom} is available:
generically in $\cC$ the automorphism is uniformly subexponentially quasiconformal
(by a result of \cite{B_Studia}),
and densely in $\cC$ there is an invariant conformal structure (by a result of \cite{BN_elementary});
see \S~\ref{ss.dom_vs_conf} for details.

Theorem~\ref{t.densedom} does not hold in dimension $m=2$,
because in this case there exists another obstruction to domination
related to the rotation number: see \S~\ref{ss.dim2}.

For examples, we refer the reader to \S\S~\ref{ss.two_types} and \ref{ss.indices},
where we show that $\Aut(\E,f)$ can indeed contain classes of 
both types (\ref{i.obstruction}) and (\ref{i.densedom}),
and that a class of type (\ref{i.densedom}) can contain different types of domination.

\medskip

Other results on denseness of domination may be found in the papers 
\cite{Million} (for autonomous linear differential equations),
\cite{Cong} (for bounded measurable cocycles), 
\cite{ABD1,ABD2} (for continuous $\SL(2,\R)$-cocycles over uniquely ergodic dynamics), 
\cite{FJZ} (for H\"older-continuous $\SL(2,\R)$-cocycles over generic irrational flows on the two-torus),
and \cite{AJS} (for analytic complex-valued cocycles over rotations).

\medskip

We next describe the setting of fiberwise isometries,
which we will later relate to Theorem~\ref{t.densedom}.

\subsection{Fiberwise isometries and almost invariant sections}\label{ss.fib_isom_sections}


Let $X$ be a compact Hausdorff space, and let $Y$ be a manifold. 
A fiber bundle $\bundle{Z}{X}{p}{Y}$ 
is called \emph{fiberwise smooth} if 
its structural group is formed by diffeomorphisms of $Y$.
Then each fiber $Z_x \coloneqq p^{-1}(x)$ has a manifold structure
and is diffeomorphic to~$Y$.
If $g \colon Z \to Z$ is a bundle automorphism then there exists
a homeomorphism $f \colon X \to X$ such that $g$ diffeomorphically maps
the fiber $Z_x$ to the fiber $Z_{f(x)}$. We say that $g$ fibers over $f$.
The set of all automorphisms of $Z$ is denoted by $\Aut(Z)$,
and the set of automorphisms fibering over a given $f \in\Homeo(X)$
is denoted by $\Aut(Z,f)$.


The \emph{vertical tangent bundle} is the union $\bigsqcup_{x\in X} TZ_x$ 
of the tangent bundles of the fibers, endowed with the obvious vector bundle structure.
A \emph{fibered Riemannian structure} on the fiberwise smooth bundle $\bundle{Z}{X}{p}{Y}$ is
a continuous field of positive definite quadratic forms on the vertical tangent bundle
whose restriction to each $TZ_x$ is a (smooth) Riemannian metric on the manifold $Z_x$.
Such structures always exist.
An automorphism $g \in \Aut(Z)$ is called a \emph{fiberwise isometry} if it preserves 
a given fibered Riemannian structure.


Let $\Sec(Z)$ denote the space of all sections of $Z$, that is, 
all continuous maps $\sigma \colon X \to Z$ such that $p \circ \sigma = \id_X$.
The \emph{distance} between $\sigma$, $\sigma' \in \Sec(Z)$ is defined as
\begin{equation}\label{e.def_distance}
\mathrm{d}(\sigma, \sigma') \coloneqq \sup_{x \in X} \mathrm{d}_x( \sigma(x), \sigma'(x)),
\end{equation}
where $\mathrm{d}_x$ denotes Riemannian distance on the fiber $Z_x$.
This makes $\Sec(Z)$ a metric space.


Throughout this paper, we denote the unit interval as
$$
I \coloneqq [0,1] \, .
$$
We say that $\sigma$, $\sigma' \in \Sec(Z)$ are \emph{fibered homotopic}
if they are homotopic through sections, that is,
there exists a continuous curve $t \in I \mapsto \sigma_t \in \Sec(Z)$ 
(called a \emph{fibered homotopy})
from $\sigma_0 = \sigma$ to $\sigma_1 = \sigma'$. 
If the fibered homotopy is of the form $\sigma_t = (g_t)_* \sigma$
for some continuous curve $t \in I \mapsto g_t \in \Aut(Z,\id)$
starting from $g_0 = \id$
then we say that $\sigma$ and $\sigma'$ are \emph{isotopic},
and that $\{g_t\}_{t \in I}$ is an \emph{ambient isotopy}
that moves the section $\sigma$ to the section $\sigma'$.


Given $g \in \Aut(Z)$ and $\sigma \in \Sec(Z)$,
we define a new section $g_*\sigma \in \Sec(Z)$ by
\begin{equation}\label{e.push_section}
(g_* \sigma)(x) \coloneqq g(\sigma(f^{-1}(x))) \, ,
\end{equation}
where $f$ is the homeomorphism over which $g$ fibers.
A section $\sigma$ is called:
\begin{itemize}
\item \emph{$g$-invariant} if $g_* \sigma = \sigma$;
\item \emph{$\epsilon$-almost $g$-invariant} for some $\epsilon > 0$ if $\mathrm{d}(g_* \sigma, \sigma) < \epsilon$;
\item \emph{$g$-invariant up to homotopy} if $\sigma$ and $g_* \sigma$ are fibered homotopic;
\item \emph{$g$-invariant up to isotopy} if $\sigma$ and $g_* \sigma$ are isotopic.
\end{itemize}

We can now state the second main result of this paper:

\begin{theorem}\label{t.sections}
Let $f \colon X \to X$ be a minimal diffeomorphism of 
a compact manifold $X$ of positive dimension.
Let $Y$ be a compact connected manifold with finite fundamental group.
Consider a fiberwise smooth bundle $\bundle{Z}{X}{p}{Y}$
endowed with a fibered Riemannian structure,
and let $g \in \Aut(Z,f)$ be a fiberwise isometry.
Suppose that $\sigma \in \Sec(Z)$ is $g$-invariant up to isotopy.
Then for any $\epsilon>0$ there exists 
an  $\epsilon$-almost $g$-invariant section
$\omega \in \Sec(Z)$ that is fibered homotopic to $\sigma$.
\end{theorem}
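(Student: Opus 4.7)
The plan is to view $g_*$ as an isometric action on the section space $\Sec(Z)$ and produce an approximate fixed point using the recurrence supplied by minimality of $f$ together with the Calder--Siegel--Williams quantitative homotopy theorem to keep the construction inside the prescribed isotopy class. First I would observe that $g_*$ is a $C^0$-isometry of $(\Sec(Z),\mathrm{d})$, since the metric \eqref{e.def_distance} is defined fiberwise, $g$ acts by isometries between fibers, and $f$ merely relabels the base. Pushing the ambient isotopy forward by $g^n$ shows that $g_*^n\sigma$ is isotopic to $g_*^{n+1}\sigma$ for every $n$, so the entire forward orbit $\{g_*^n\sigma\}_{n \ge 0}$ lies in a single isotopy class and has $C^0$-diameter bounded by the fiber diameter of $Z$.

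I would next reduce to the case of simply connected fibers. Since $\pi_1(Y)$ is finite, the universal cover $\tilde Y$ is compact; after (if needed) replacing $X$ by a finite cover on which $f$ lifts to a minimal diffeomorphism, one obtains a fiberwise smooth bundle $\tilde Z \to X$ with fiber $\tilde Y$, a lifted fiberwise isometry $\tilde g$, and a lift $\tilde\sigma$ of $\sigma$. An $\epsilon$-almost $\tilde g$-invariant section of $\tilde Z$ then projects to an $\epsilon$-almost $g$-invariant section of $Z$ in the correct fibered homotopy class.

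For the main construction, fix $\epsilon>0$ and use minimality (together with a multiple-recurrence argument) to choose a small $\delta>0$ and integers $n_1<\cdots<n_k$ that are common approximate return times for $f$: for every $x\in X$ one has $d_X(f^{n_i}x,x)<\delta$ for most $i$. Via local trivializations of $\tilde Z$, the isometry property of the fibers forces the values $\tilde g^{n_i}(\tilde\sigma(f^{-n_i}x))$ to cluster in a small ball of the fiber $\tilde Z_x$. I would define $\omega(x)$ as their fiberwise Riemannian barycenter (Karcher mean), which is well-defined once the cluster lies inside a common convex normal neighborhood. Because $\tilde g$ acts by isometries on fibers, applying $\tilde g_*$ to $\omega$ permutes the barycenter inputs up to a small isometric error controlled by $\delta$, yielding $\mathrm{d}(\tilde g_*\omega,\omega)<\epsilon$. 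The CSW theorem enters precisely to ensure that this barycentric construction can be carried out while staying inside the prescribed isotopy class: in the simply connected regime it asserts that any two $C^0$-close homotopic maps are joined by a \emph{short} homotopy, which allows one to interpret the barycenters as continuous sections fibered homotopic to $\tilde\sigma$.

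The main obstacle I anticipate is the failure of $\Sec(Z)$ to be compact: the orbit $\{g_*^n\sigma\}$ is not automatically equicontinuous in the base direction, so one cannot simply extract a $C^0$-limit point and declare it invariant, and linear Ces\`aro averaging is meaningless on the nonlinear fibers. The CSW theorem is what replaces this missing compactness, converting the qualitative statement ``the orbit lies in a single isotopy class'' into quantitative homotopies of bounded size that can be composed and averaged. A secondary difficulty is the coordinated choice of $\delta$, of the common return times $n_i$, and of the convexity radii of the fibers so that the barycentric construction is well-defined and continuous at every $x$; this is where minimality (and multiple recurrence) of $f$ is used in an essential way, allowing the returns to be made as dense as needed at the relevant scale.
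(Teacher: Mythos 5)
Your proposal has a genuine gap at its central step. Minimality of $f$ does not provide times $n_1<\cdots<n_k$ with $d_X(f^{n_i}x,x)<\delta$ simultaneously for all (or ``most'') $x\in X$: that is a uniform-rigidity type property which minimal diffeomorphisms in general do not possess (minimality gives syndetic return times of each point to each of \emph{its} neighborhoods, with constants depending on the neighborhood, not a common time at which the whole manifold nearly returns to itself). Even granting such times, the values $\tilde g^{n_i}(\tilde\sigma(f^{-n_i}x))=(\tilde g_*^{n_i}\tilde\sigma)(x)$ have no reason to cluster in a small ball of the fiber: $\sigma$ is not invariant, so the iterated sections can take essentially arbitrary values in the compact fiber, and the smallness hypothesis needed for a Karcher mean is exactly what is missing. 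Moreover, on compact fibers (spheres, Grassmannians, $\dots$) barycenters are not globally defined or unique; barycentric averaging of the kind you describe is the method of \cite{CNP,BN_geometric} and requires the nonpositive curvature/convexity that is unavailable here --- this is precisely why the argument in the paper is topological rather than geometric. Finally, Theorem~\ref{t.CSW} is misquoted: it produces a homotopy of uniformly bounded speed between any two \emph{homotopic} maps, regardless of their $C^0$-distance; it does not assert that $C^0$-close homotopic maps are joined by a $C^0$-short homotopy, and it cannot substitute for the missing compactness of $\Sec(Z)$ or for averaging in the way you describe. (The reduction to the fiberwise universal cover also needs justification --- the existence of the lifted bundle, of lifts of $g$ and $\sigma$, and the minimality of any lifted base dynamics are not automatic --- but this is secondary to the issues above.)

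For comparison, the paper's route is different: using regular dynamical stratifications (Theorem~\ref{t.regularity}) it first constructs, within the given isotopy class, a family of sections $\phi_t$ that are \emph{exactly} invariant outside a small regular disk $K$ (Lemma~\ref{l.predom_inv}); it then takes a tall tower $K, f(K),\dots,f^n(K)$, applies Theorem~\ref{t.CSW} to obtain a $b$-Lipschitz homotopy relative to $\partial K$ between $\phi_1|_K$ and $(g_*^{-n}\phi_1)|_K$, and distributes this homotopy along the $n$ floors of the tower, so that each single step has size at most $cb/n<\epsilon$. The role of the quantitative homotopy theorem is thus to bound the total speed of a homotopy that is spread over the tower, not to compare nearby maps or to legitimize a barycentric construction.
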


\begin{remark}
Actually it is equivalent to suppose that $\sigma$ is $g$-invariant up to homotopy.
This equivalence can be proven by using vector bundle neighborhoods \cite[Thrm.~12.10]{Palais} (a tool generally used to endow $\Sec(Z)$ with a Banach manifold structure), but we will not provide the technical details.
\end{remark}

As a corollary of Theorem~\ref{t.sections},
we will show in \S~\ref{ss.coboundaries} 
that a cocycle on a compact Lie group with finite center
is an almost coboundary if and only if it is homotopic to a coboundary.

\medskip

Theorem~A from \cite{BN_geometric} also constructs almost invariant sections
for fiberwise isometries, but under hypotheses very different from those of Theorem~\ref{t.sections}.
See \S~\ref{ss.unification} for a discussion of
possible connections between these two results.

\subsection{Comments on the proofs and organization of the paper}

The broad strategy that we follow to prove Theorem~\ref{t.densedom}
is the same used in \cite{ABD1,ABD2} in a more restricted setting.
Absence of domination allows us 
to mix Lyapunov exponents and make the dynamics conformal, after a suitable perturbation.
Conformality allows us to induce certain fiberwise isometries,
and using almost invariant sections we introduce some weak domination with further perturbations.
(See \S\S~\ref{ss.dom_vs_conf}--\ref{ss.proof_densedom} for details.)

Let us comment how the construction of almost invariant sections
presented in this paper relates to previous ones.
The papers \cite{ABD1,ABD2} use ``dynamical stratifications'' (see \S~\ref{ss.strat})
and towers to construct almost-invariant sections.
Actually these constructions, which only form part of these papers,
can be considerably simplified by 
the geometric methods of \cite{BN_geometric} 
or the more specific linear-algebraic methods of \cite{BN_elementary}.
Unfortunately, these ``cleaner'' methods require a convenient geometry and  
do not apply to the situation considered here.
Thus our constructions are closer to those of \cite{ABD1,ABD2}
(though we do not directly use results from these papers).

The generality of Theorem~\ref{t.sections} creates new topological problems,
and we need two novel tools:
One tool is a certain regularity property of the dynamical stratifications
which, despite being natural, is not straightforward to obtain.
The other tool is actually not new,
but this is perhaps the first time it is used for dynamical applications:
it is a ``quantitative homotopy'' result from \cite{CS80,SW89}.

\medskip

The rest of this paper is organized as follows:
In Section~\ref{s.consequences} we explain a result from \cite{BN_elementary}
which is then combined with Theorem~\ref{t.sections}
to deduce Theorem~\ref{t.densedom}; we also explain an independent application of 
Theorem~\ref{t.sections} to almost coboundaries.
In Section~\ref{s.ingredients} we explain the two new tools 
mentioned above,
which we then employ in Section~\ref{s.proof_sections} to prove Theorem~\ref{t.sections}.
Section~\ref{s.examples} contains examples and remarks on the necessity of
the various hypotheses in our theorems, as well as questions for future research.
The more technical construction of regular dynamical stratifications
is given in Appendix~\ref{s.appendix}.


\medskip

\begin{ack}
Discussions with Carlos~Tomei (PUC--Rio) lead me to believe that 
slow homotopies should exist under mild conditions,
and so propelled me to search the literature 
until I found the papers by Calder, Siegel, and Williams.
\end{ack}

\section{Consequences of Theorem~\ref{t.sections}}\label{s.consequences}

\subsection{Domination versus conformality}\label{ss.dom_vs_conf}

The link between our two main Theorems \ref{t.densedom} and \ref{t.sections}
is made by means of the following result:

\begin{otherthm}[Bochi--Navas \cite{BN_elementary}]\label{t.BN}
Let $f \colon X \to X$ be a minimal homeomorphism of 
a compact space $X$ of finite dimension.
Let $\E$ be a vector bundle over $X$.
Then there exists a dense subset $\cI \subset \Aut(\E,f)$ such that for every $A \in \cI$,
\begin{enumerate}
\item\label{i.BN_noDS} 
either $A$ has a dominated splitting;
\item\label{i.BN_conformal}
or $A$ is conformal with respect to some Riemannian metric on $\E$.
\end{enumerate}
\end{otherthm}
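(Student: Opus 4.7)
The plan is to translate conformality into an invariant-section problem on a bundle with nonpositively curved fibers, and to use that CAT(0) geometry to convert the absence of domination into the existence of an invariant conformal structure after a small perturbation.

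First, I would reformulate the target. The space $\cC_m$ of conformal classes of inner products on $\R^m$ is canonically the noncompact symmetric space $\GL(m,\R)/(\R_+^*\cdot\O(m)) \simeq \SL(m,\R)/\SO(m)$, a CAT(0) Riemannian manifold on which $\GL(m,\R)$ acts by isometries. Passing to the associated bundle, any $A \in \Aut(\E,f)$ induces a fiberwise isometry $\widetilde{A}$ on a bundle $\cP(\E) \to X$ with fibers $\cC_m$. A section of $\cP(\E)$ is precisely a Riemannian metric on $\E$ modulo pointwise positive scaling, and $A$ is conformal with respect to some metric if and only if $\widetilde{A}$ admits an invariant section. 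Hence the task reduces to producing an invariant section of $\cP(\E)$ for a small perturbation of $A$ whenever $A$ itself has no dominated splitting.

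Next, I would extract approximate invariance from the absence of domination. Fix an initial conformal structure $\tau_0 \in \Sec(\cP(\E))$ and consider the iterates $\tau_n \coloneqq (\widetilde{A})^n_* \tau_0$. The singular-value characterization of domination (cited in the introduction as \cite{BG,Morris}) identifies the uniform exponential growth of the gap $\sigma_k(A^n_x)/\sigma_{k+1}(A^n_x)$ with a dominated splitting of index $k$; translated into $\cC_m$, the displacement in the fiber of $\cP(\E)$ between $\tau_n(x)$ and $\tau_0(f^n x)$ is controlled by extremal singular-value ratios. Absence of any dominated splitting thus forces $\widetilde{A}$-orbits to return to a bounded region of each fiber along a recurrence subsequence provided by minimality of $f$, and a CAT(0) circumcenter applied to the closed convex hull of such a returning orbit yields a canonical section $\tau_\infty \in \Sec(\cP(\E))$ that is asymptotically $\widetilde{A}$-invariant.

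Finally, I would upgrade asymptotic invariance to exact invariance by perturbing $A$. Decompose $X$ into thin Rokhlin towers for $f$ and modify $A$ on each tower by an arbitrarily small cocycle correction taking values in the stabilizer $\R_+^*\cdot\O(m)$ of the relevant fiber point of $\tau_\infty$, chosen to cancel the residual mismatch between $\widetilde{A}_* \tau_\infty$ and $\tau_\infty$; a boundary-matching argument, using minimality of $f$, arranges the correction to depend continuously on $x$. The resulting $A'$ then admits $\tau_\infty$ as a genuine invariant conformal structure, hence is conformal with respect to some Riemannian metric on $\E$. The main obstacle is this closure step: forcing the residual drift to vanish exactly and continuously requires using that $\R_+^*\cdot\O(m)$ acts transitively on the tangent directions of $\cC_m$ at each point, together with the fact that the drift can be made arbitrarily small a priori by the CAT(0) circumcenter construction only under the absence-of-DS hypothesis — so both ingredients of the dichotomy are genuinely needed to complete the perturbation.
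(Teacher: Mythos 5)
There is a genuine gap, and it sits at the heart of your second step. You claim that absence of any dominated splitting ``forces $\widetilde{A}$-orbits to return to a bounded region of each fiber'' of the bundle with fibers $\SL(m,\R)/\SO(m)$, so that a CAT(0) circumcenter produces an asymptotically invariant conformal structure. This inference is false: absence of domination says only that no singular-value gap $\sigma_k(A^n_x)/\sigma_{k+1}(A^n_x)$ grows uniformly exponentially; it does not prevent the extremal ratio $\sigma_1(A^n_x)/\sigma_m(A^n_x)$ — i.e.\ the fiberwise displacement $d\bigl(\tau_n(x),\tau_0(f^nx)\bigr)$ — from growing linearly. A cocycle can have distinct Lyapunov exponents (hence linear drift in the symmetric space almost everywhere) while its Oseledets splitting is merely measurable and admits no domination; minimality of $f$ gives recurrence in the base, not boundedness of the orbit in the fiber. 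So there is no bounded convex hull to take a circumcenter of, and no a priori small drift for your final perturbation to cancel. The actual first (and hardest) step of the Bochi--Navas argument is precisely the perturbative theorem of \cite{B_Studia} (extending \cite{BV,AB}): \emph{generic} elements of $\Aut(\E,f)$ either admit a dominated splitting or are uniformly subexponentially quasiconformal, i.e.\ all their Oseledets data are trivial. That is where ``absence of domination allows Lyapunov exponents to be mixed by suitable perturbations'' enters, and it cannot be replaced by a soft recurrence-plus-barycenter argument applied to the unperturbed $A$: the dichotomy of the theorem only holds on a dense set exactly because this mixing is itself a perturbative, generic statement.

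A secondary issue is your closing step. Even once subexponential quasiconformality is available, the remaining work is to build a Riemannian metric with small quasiconformal distortion and then perturb $A$ to be exactly conformal for it; the subtlety, which the paper flags explicitly, is that this new metric is typically enormously distorted relative to the original one, so the correction must be small in the \emph{original} metric while it is naturally defined relative to the distorted one. Your Rokhlin-tower correction in the stabilizer $\R_+^*\cdot\O(m)$ does not address this, and the ``boundary-matching by minimality'' is not a routine point (also note $X$ here is only a compact finite-dimensional space with a minimal homeomorphism, so there is no smooth tower machinery to lean on). In \cite{BN_elementary} these last two steps are done by elementary linear algebra, and in \cite{BN_geometric} by genuinely geometric constructions on fiberwise isometries — but in both cases only \emph{after} the generic dichotomy from \cite{B_Studia}, which your proposal omits.
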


Domination evidently fails in the second alternative, and it does so
in the most extreme of ways: all vectors in the same fiber are expanded 
(at time $1$) at exactly the same rate.

Let us summarize what is involved in the proof of this result.
The first part of the proof is to apply a theorem from \cite{B_Studia} 
which, extending previous results of \cite{BV,AB}, states that
generic elements of $\Aut(\E,f)$ either admit dominated splittings or are uniformly subexponentially quasiconformal (i.e., such that its Oseledets decompositions
are all trivial).
The one-phrase rationale behind it is this:
absence of domination allows Lyapunov exponents to be mixed by suitable perturbations.
The second part of the proof of Theorem~\ref{t.BN} is to 
construct a Riemannian metric with respect to which the 
quasiconformal distortion is small.
The third and final part is to perturb the automorphism to become
conformal with respect to this metric -- 
which is not obvious, because the new metric is usually very distorted 
as compared to the initial one.
The second and third parts of the proof can be carried out
by using elementary linear-algebraic tools, as it is done in \cite{BN_elementary},
or by geometric constructions on fiberwise isometries of suitable spaces,
as in \cite{BN_geometric}.
Finally, let us mention that 
the three parts of the proof can be refined in order to yield similar conformality 
properties inside the subbundles of the finest dominated splitting:
the result is Theorem~2.4 from \cite{BN_elementary},
and Theorem~\ref{t.BN} is actually a corollary of it.

\subsection{The Grassmannian bundle and deduction of Theorem~\ref{t.densedom}}\label{ss.proof_densedom}

Let us prepare the ground for the use of Theorem~\ref{t.sections}.
A general procedure for obtaining fiberwise isometries
is as follows:

\begin{proposition} \label{p.fibered_riem}
Consider a fiberwise smooth bundle $\bundle{Z}{X}{p}{Y}$ 
whose structural group $H \subset \Diff(Y)$ is compact.
Then the bundle admits a fibered Riemannian structure 
with respect to which any $H$-automorphism of $Z$ is a fiberwise isometry.
\end{proposition}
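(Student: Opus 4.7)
The plan is to follow the familiar averaging procedure used to build invariant metrics whenever a compact group acts, and then to transport this metric along local trivializations using the fact that the transition maps act by isometries.

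First I would construct an $H$-invariant Riemannian metric on the model fiber $Y$. Pick any Riemannian metric $g_0$ on $Y$, let $\mu$ denote the normalized Haar measure on $H$ (which is finite since $H$ is compact), and define
\[
g_Y(v,w) \coloneqq \int_H g_0\bigl(\mathrm{d}h(v),\mathrm{d}h(w)\bigr) \, \mathrm{d}\mu(h)
\]
for $v,w \in T_yY$. Because $H$ acts smoothly on $Y$ and $\mu$ is finite, $g_Y$ is a smooth Riemannian metric, and a change of variables in $H$ shows that every element of $H$ acts on $(Y,g_Y)$ by isometries.

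Next I would transport $g_Y$ to a fibered Riemannian structure on $Z$. Choose an $H$-atlas of local trivializations $\phi_\alpha \colon p^{-1}(U_\alpha) \to U_\alpha \times Y$ whose transition cocycles $\theta_{\alpha\beta} \colon U_\alpha \cap U_\beta \to H$ take values in $H$. On each $p^{-1}(U_\alpha)$, pulling back $g_Y$ through the second coordinate of $\phi_\alpha$ gives a continuous field of Riemannian metrics on the vertical tangent spaces. On overlaps, the two locally defined fields differ by the action of $\theta_{\alpha\beta}(x) \in H$ in each fiber, hence coincide because $\theta_{\alpha\beta}(x)$ is an isometry of $(Y,g_Y)$. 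These locally defined fields therefore glue to a global fibered Riemannian structure on $Z$.

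Finally, I would check the isometry property. By definition, an $H$-automorphism $g$ of $Z$ is one whose local expression in the atlas above has the form $(x,y) \mapsto (f(x), h_\alpha(x)\cdot y)$ with $h_\alpha(x) \in H$. Since each $h_\alpha(x)$ preserves $g_Y$, the fiber map $g \colon Z_x \to Z_{f(x)}$ is an isometry with respect to the constructed structure, which is exactly the fiberwise isometry condition. The only delicate point in the whole argument is verifying that the locally transported metrics agree on overlaps, but this is an immediate consequence of the $H$-invariance of $g_Y$ built in the first step; the rest is routine bookkeeping.
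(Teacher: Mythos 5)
Your argument is correct and follows essentially the same route as the paper: average an arbitrary metric on $Y$ over the Haar measure of $H$ to get an $H$-invariant metric, then pull it back through the $H$-trivializations, where invariance under the transition maps guarantees the local definitions agree and that $H$-automorphisms act isometrically. You simply spell out the overlap-compatibility and isometry checks that the paper leaves implicit.
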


\begin{proof}
Start with any Riemannian metric on $Y$.
By averaging with respect to the Haar measure of $H$, we obtain a 
Riemannian metric on $Y$ that is preserved by $H$.
Therefore we can pull it back by bundle charts and obtain a well defined 
fibered Riemannian structure on $Z$.
This structure is obviously preserved by any $H$-automorphism of $Z$.
\end{proof}

\medskip

Given integers $1 \le k < m$, let $\Gr(k,m)$ denote the \emph{Grassmannian}
whose elements are the $k$-planes in $\R^m$;
these are compact connected manifolds, and their fundamental groups
are (see e.g.\ \cite[p.~189]{Arkowitz}):
\begin{equation}\label{e.pi1grass}
\pi_1 (\Gr(k,m)) = 
\begin{cases}
\Z   &\text{if $m=2$,} \\
\Z_2 &\text{if $m\ge 3$.}
\end{cases}
\end{equation}
Each linear automorphism of $\R^m$ induces a diffeomorphism of $\Gr(k,m)$ in the obvious way.
This defines a homomorphism 
$\iota \colon \GL(d,\R) \to \Diff(\Gr(k,m))$
whose kernel is formed by the nonzero multiples of the identity matrix.

If $\E$ is a $m$-plane bundle over a compact Hausdorff space $X$,
let $G_k(\E)$ denote the set of all $k$-planes contained in the fibers of $\E$.
This set can be given the structure of a fiber bundle 
with base space $X$, typical fiber $\Gr(k,m)$,
and structural group $\iota(\GL(d,\R))$.
Any automorphism $A$ of $\E$
induces an automorphism $\bar{A}$ of $G_k(\E)$.

Let us explain how a Riemannian metric on the vector bundle $\E$ 
induces a fibered Riemannian structure on the fiber $G_k(\E)$.
Given such a Riemannian metric, we use it to stiffen the fiber bundle structure of $\E$
so that the structural group is the orthogonal group $\O(m)$.
We also stiffen the fiber bundle $G_k(\E)$ so that the structural group is $\iota(\O(m))$.
Since this group is compact, Proposition~\ref{p.fibered_riem}
provides us with a fibered Riemannian structure on $G_k(\E)$ that has the following property: 
for any automorphism $A$ of $\E$ 
that is conformal with respect to the Riemannian metric,
the induced automorphism $\bar{A} \in \Aut(G_k(\E))$ is a fiberwise isometry.

\begin{proof}[Proof of Theorem~\ref{t.densedom}] 
Let $f \colon X \to X$ be a minimal diffeomorphism.
Let $\E$ be a $m$-plane bundle over $X$, where $m \ge 3$.
Fix a fibered-homotopy class $\cC \subset \Aut(\E,f)$.
Let $\cD$ be the open subset of $\cC$ formed by the automorphisms that have a dominated splitting.
Suppose that we are not in case~(\ref{i.obstruction}) in the statement of the theorem,
that is, there exist $A_0 \in \cC$ and $k \in \{1,2,\dots,m-1\}$
with a continuous invariant field of $k$-planes.
This means that the induced automorphism 
$\bar{A}_0 \colon G_k(\E) \to G_k(\E)$
has an invariant section $\sigma \colon X \to G_k(\E)$.

Take an arbitrary open set $\cV \subset \cC$;
we will show that $\cD \cap \cV \neq \emptyset$,
so concluding that property (\ref{i.densedom}) holds and therefore proving the theorem.

Let $\cI$ be the dense subset of $\Aut(\E,f)$ provided by Theorem~\ref{t.BN}.
Fix $A_1 \in \cI \cap \cV$.
If $A_1 \in \cD$ then we have nothing to show, so assume that $A_1 \not\in \cD$.
Then we are in case~(\ref{i.BN_conformal}) in Theorem~\ref{t.BN}, that is,
there is a Riemannian metric on $\E$ with respect to which $A_1$ is conformal.
As explained above, we can endow the bundle $G_k(\E)$ with a fibered Riemannian structure
with respect to which the automorphism $\bar{A}_1$ is a fiberwise isometry.

Since $A_0$ and $A_1$ belong to the class $\cC$,
there exists a fibered homotopy $(A_t)_{t \in I}$ in $\Aut(E,f)$ between $A_0$ and $A_1$.
Then $(\bar{A}_t \circ \bar{A}_0^{-1})_{t\in I}$ is an ambient isotopy 
that moves the section $\sigma$ to the section $(\bar{A}_1)_* \sigma$.
In particular, $\sigma$ is $\bar{A}_1$-invariant up to isotopy.

The fibers of the bundle $G_k(\E)$ satisfy the hypotheses of Theorem~\ref{t.sections}:
they are compact connected manifolds which by \eqref{e.pi1grass} 
have finite fundamental groups.
Therefore for each $i \in \N$
we can apply Theorem~\ref{t.sections} 
and obtain an $1/i$-almost $\bar{A}_1$-invariant
section $\omega_i \colon X \to G_k(\E)$.
This means that $\omega_i$ is uniformly $1/i$-close to the section $\omega_i'$
defined by $\omega_i'(x) \coloneqq A_{1\star}(f^{-1}x) (\omega_i(x))$.

For each $i \in \N$, we can find an automorphism $R_i \in \Aut(\E,\id)$ such that for each $x \in X$,
$R_i(x)$ is an orthogonal linear map (with respect to the Riemannian metric on the fiber $\E_x$)
and sends the $k$-plane $\omega_i(x)$ to the $k$-plane $\omega_i'(x)$.
Moreover, it is possible to choose the sequence $(R_i)$ converging to the identity autormorphism.

For each $i \in \N$ and $x \in X$, 
let $D_i(x)$ be the isomorphisms of $\E_x$ that preserves the $k$-plane $\omega_i(x)$ and its orthogonal complement $\omega_i^\perp(x)$,
and whose restriction to $\omega_i(x)$ (resp.\ $\omega_i^\perp(x)$) 
is $e^{1/i}$ (resp.\ $e^{-1/i}$) times the identity.
This defines a sequence of automorphisms $D_i \in \Aut(\E,\id)$ that converges to the identity.

The sequence $(B_i)$ on $\Aut(\E,f)$ defined by $B_i \coloneqq D_i \circ R_i \circ A_1$ converges to $A_1$.
Moreover each $B_i$ belongs to $\cD$, since it admits the dominated splitting $\omega_i \oplus \omega_i^\perp$.
Therefore $\cD \cap \cV \neq \emptyset$, as we wanted to prove.
\end{proof}

\subsection{Almost coboundaries}\label{ss.coboundaries}

We will describe another application of Theorem~\ref{t.sections}.

Let $f \colon X \to X$ be a homeomorphism of a compact Hausdorff space,
let $G$ be a topological group, and let $A \colon X \to G$ be continuous cocycle over $f$. We say that $A$ is a \emph{coboundary} if there exists a continuous map 
$B \colon X \to G$ such that 
$$
A(x) = B(f(x)) B(x)^{-1} \, .
$$
A cocycle is called an \emph{almost coboundary} if it is the limit of a sequence of coboundaries.

\begin{corollary}\label{c.coboundaries}
Let $f \colon X \to X$ be a minimal diffeomorphism of 
a compact manifold $X$ of positive dimension.
Let $G$ be a compact Lie group with finite center.
Then a cocycle $A \colon X \to G$ is an almost coboundary 
if and only if it is homotopic to a coboundary.
\end{corollary}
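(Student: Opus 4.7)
The plan is to use Theorem~\ref{t.sections} with the trivial bundle $Z = X \times G$ and the skew-product automorphism determined by $A$. Fix once and for all a bi-invariant Riemannian metric $\mathrm{d}_G$ on $G$, obtained by averaging an arbitrary Riemannian metric under left and right translations via Haar measure, and let $\exp\colon\mathfrak{g}\to G$ denote its exponential map. For the easy direction ``almost coboundary $\Rightarrow$ homotopic to a coboundary'', suppose $A$ is the uniform limit of coboundaries $A_n(x) = B_n(f(x))\,B_n(x)^{-1}$. For $n$ large, $A_n(x)^{-1} A(x)$ lies in a normal neighborhood of $e\in G$ for every $x$, so $A(x) = A_n(x)\,\exp(\xi(x))$ for a unique continuous $\xi\colon X\to\mathfrak{g}$; then $(x,s)\mapsto A_n(x)\exp(s\xi(x))$ is a continuous homotopy of cocycles from $A_n$ to $A$.

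For the converse, define $Z \coloneqq X\times G$ with the fibered Riemannian structure induced by $\mathrm{d}_G$, and let $g \in \Aut(Z,f)$ act by $g(x,y)\coloneqq (f(x), A(x)y)$; since left translations are isometries, $g$ is a fiberwise isometry. Sections correspond to continuous maps $B\colon X\to G$ via $\sigma_B(x)\coloneqq(x,B(x))$, and a short computation using right-invariance of $\mathrm{d}_G$ gives the key identity
\[
\mathrm{d}(g_*\sigma_B,\,\sigma_B)\;=\;\sup_{y\in X}\mathrm{d}_G\bigl(A(y),\; B(f(y))\,B(y)^{-1}\bigr),
\]
so $\sigma_B$ is $\epsilon$-almost $g$-invariant if and only if the coboundary $\delta B(y)\coloneqq B(f(y))B(y)^{-1}$ is uniformly $\epsilon$-close to $A$. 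To apply Theorem~\ref{t.sections} I need $Y=G$ to be a compact connected manifold with finite fundamental group: since any coboundary, and hence any cocycle homotopic to one, takes values in the identity component $G_0$, I may replace $G$ by $G_0$ and assume $G$ is connected (the case of disconnected $X$ is handled component by component under the $f$-action). The finite-center hypothesis then forces $Z(G)^{0}$, a torus contained in the finite group $Z(G)$, to be trivial, so $G$ is compact semisimple and by Weyl's theorem $\pi_1(G)$ is finite.

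Finally, suppose $A$ is homotopic through cocycles $(A_t)_{t\in I}$ with $A_0=A$ and $A_1(y)=D(f(y))D(y)^{-1}$ for some continuous $D\colon X\to G$. Take $\sigma\coloneqq \sigma_D$ and define an ambient isotopy $h\colon I\to\Aut(Z,\id)$ by
\[
h_t(x,y)\coloneqq\bigl(x,\;A_{1-t}(f^{-1}x)\,D(f^{-1}x)\,D(x)^{-1}\cdot y\bigr).
\]
The coboundary identity $A_1(f^{-1}x)D(f^{-1}x)=D(x)$ gives $h_0=\id$, and direct substitution gives $(h_1)_*\sigma = g_*\sigma$; hence $\sigma$ is $g$-invariant up to isotopy. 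Theorem~\ref{t.sections} then supplies, for every $\epsilon>0$, an $\epsilon$-almost $g$-invariant section $\sigma_{B_\epsilon}$, and the displayed identity above shows that the coboundary $\delta B_\epsilon$ is uniformly $\epsilon$-close to $A$; so $A$ is a uniform limit of coboundaries, as required. The main difficulty here is not analytic but organizational: setting up the bi-invariant metric so that the isometry hypothesis of Theorem~\ref{t.sections} actually holds, carrying out the finite-center/finite-$\pi_1$ reduction, and fabricating the explicit ambient isotopy $h_t$ out of the given cocycle homotopy.
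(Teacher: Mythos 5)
Your proposal is correct and follows essentially the same route as the paper: the product bundle $X\times G$ with a bi-invariant metric so that the skew product $g(x,y)=(f(x),A(x)y)$ acts by fiberwise isometries, the dictionary ``(almost) invariant section $\leftrightarrow$ (almost) coboundary'' and ``section invariant up to isotopy $\leftrightarrow$ homotopic to a coboundary'', Weyl's theorem to get finiteness of $\pi_1$, and then Theorem~\ref{t.sections}. Your explicit ambient isotopy $h_t$ and the distance identity via right-invariance are exactly the verifications the paper leaves implicit, and they check out; the easy direction via the exponential chart is also fine (the paper does not even spell it out).

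The one place where you genuinely diverge is the reduction to connected $G$. You restrict to the identity component $G_0$, arguing that coboundaries (and hence cocycles homotopic to them) are $G_0$-valued; but this uses connectedness of $X$ (with $X$ disconnected, a coboundary $B(f(x))B(x)^{-1}$ need not land in $G_0$ if $B$ takes values in different components on different components of $X$), and your parenthetical ``handled component by component'' is an assertion, not an argument — note that the corollary does not assume $X$ connected. The paper's reduction is a conjugation trick that avoids this entirely: if $A$ is homotopic to the coboundary of $B$, then $x\mapsto B(f(x))^{-1}A(x)B(x)$ is null-homotopic, hence $G_0$-valued and homotopic within $G_0$ to the trivial coboundary; applying the connected case to it and using that $B(f(x))\,C(f(x))C(x)^{-1}\,B(x)^{-1}$ is the coboundary of $BC$ transfers the approximation back to $A$. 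With that substitution your argument coincides with the paper's; as written, the passage from $G$ to $G_0$ is the only step that needs repair.
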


\begin{proof}
Endow $G$ with a bi-invariant metric,
and consider the product bundle $X \times G$ over $X$.
Then each cocycle $A \colon X \to G$ induces a fiberwise isometry 
$g(x,y) = (f(x), A(x) y)$;
moreover $A$ is a coboundary if and only if $g$ has an invariant section,
and $A$ is homotopic to a coboundary if and only if $g$ has a section
that is invariant up to isotopy.

First consider the case of connected $G$.
Since the Lie algebra of $G$ has trivial center,
by a theorem of Weyl (see e.g.\ \cite[p.~82]{Hsiang}),
the fundamental group of $G$ is finite.
Therefore the corollary follows from Theorem~\ref{t.sections}.

In general, if a cocycle $A$ is homotopic to a coboundary $x \mapsto B(f(x)) B(x)^{-1}$,
then the cocycle $x \mapsto B(f(x))^{-1} A(x) B(x)$ takes values in the identity component of $G$,
and therefore the corollary follows from the previous case.
\end{proof}

\section{Ingredients for the proof of Theorem~\ref{t.sections}} \label{s.ingredients}

\subsection{Speed control for homotopies}

We begin with an informal motivation for quantitative homotopy problems.
Let $X$ be any topological space, and let $Y$ be a Riemannian manifold.
A homotopy $F \colon X \times I \to Y$ is called \emph{$c$-Lipschitz} if 
$$
d(F(x,t), F(x,s)) \le c |t - s| , \quad \text{for all $t$, $s\in I$ and $x\in X$.}
$$
Assume that $X$ is compact;
then it is not difficult to see that any two homotopic maps 
$X \to Y$
are Lipschitz homotopic.
Calder, Siegel and Williams have dealt with this kind of question:
\emph{If $Y$ is compact, can we choose the homotopy above with a ``small'' Lipschitz constant?}
More precisely, \emph{is there a finite constant $b = b(X,Y)$ 
such that one can always find a $b$-Lipschitz homotopy 
between any given pair of homotopic maps $X \to Y$?}

The answer is clearly negative in general.
For example, it is easy to see that $b(I,S^1) = \infty$:
despite all maps $I \to S^1$ being homotopic, 
an homotopy that has to unwind many turns will necessarily have 
large Lipschitz constant.

Let us see some situations where the answer is positive.
First, $b(I,S^2)<\infty$; this can be shown by 
using a deformation retraction of the punctured sphere to a point.
The same trick shows that $b(I^n, S^{n+1}) < \infty$ for each $n$.
What about $b(I^2,S^2)$? 
Since any map $I^2 \to S^2$ can be lifted with respect to the Hopf fibration to a map $I^2 \to S^3$,
we can perform a controlled Lipschitz homotopy on $S^3$ and then project back to $S^2$.
Thus $b(I^2,S^2) < \infty$.
The same argument shows that $b(X,S^2) < \infty$ for any $2$-dimensional manifold $X$.

After considering the examples above, one may guess
that the topology of $X$ is not very important,
and it is the topology of $Y$ which determines the finiteness of $b(X,Y)$.
This is indeed true; in fact, the following very general result holds:

\begin{otherthm}[Calder--Siegel--Williams]\label{t.CSW}
Let $d \in \N$ and let $Y$ be a compact Riemannian manifold with finite fundamental group.
Then there exists $b = b(d,Y) > 0$ with the following properties.
Let $X$ be a compact CW-complex of dimension $d$, and let $A \subset X$ be a subcomplex.
Let $f_0$, $f_1 \colon X \to Y$ be homotopic relative to $A$.
There there exists a  $b$-Lipschitz homotopy relative to $A$ between the two maps.
\end{otherthm}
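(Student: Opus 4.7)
My approach is to reduce the problem to a uniform local filling lemma on a single cell, using the universal cover of $Y$ (which is compact since $\pi_1(Y)$ is finite) as the natural working space.

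First I would set up an induction on the cellular filtration of $X \times I$, given the product CW-structure and with subcomplex $(A \times I) \cup (X \times \partial I)$. The initial data is the prescribed homotopy on this subcomplex (on $A \times I$ by the ``rel $A$'' hypothesis, and on $X \times \partial I$ by $f_0$ and $f_1$). Inductively, suppose a rel $A$ homotopy $F$ with $F_0 = f_0$, $F_1 = f_1$ and time-Lipschitz constant $c_{k-1}$ has been constructed over $A \cup X^{(k-1)}$. For each open $k$-cell $e$ of $X \setminus A$, the boundary of $\bar e \times I$ is a $k$-sphere carrying a continuous map $\psi_e$ into $Y$, and $\psi_e$ is null-homotopic because the given (possibly wild) homotopy between $f_0$ and $f_1$ already fills $\bar e \times I$. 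To continue the induction one needs to refill $\bar e \times I \cong D^{k+1}$ with time-Lipschitz constant bounded by some $b_k(Y)$ depending only on $k$ and $Y$; iterating through $k = 0, 1, \dots, d$ then yields $b = \max_{k \le d} b_k$.

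The task thus reduces to the following uniform filling lemma: for each $k \le d$, every null-homotopic map $\psi \colon S^k \to Y$ extends to a map $\Psi \colon D^{k+1} \to Y$ whose Lipschitz constant along the radial direction is at most $b_k(Y)$. The finiteness of $\pi_1(Y)$ enters crucially here: for $k \ge 2$ the sphere $S^k$ is simply connected, and for $k = 1$ a null-homotopic loop has trivial image in $\pi_1(Y)$, so in either case $\psi$ lifts through the covering $p \colon \widetilde Y \to Y$; since $p$ is a local isometry and $\widetilde Y$ is compact and simply connected, it is enough to prove the filling lemma for compact simply connected targets.

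The main obstacle is precisely this simply connected filling lemma, which is the heart of Calder--Siegel--Williams' contribution. My plan to attack it is through the Postnikov tower of $\widetilde Y$: by Serre's theorem every $\pi_n(\widetilde Y)$ is finitely generated abelian, so up to dimension $d+1$ the tower is a finite sequence of principal $K(\pi_n, n)$-fibrations with $k$-invariants living in finite-type cohomology. Climbing the tower one stage at a time, the filling of $\psi$ breaks into a finite list of standard obstruction-theoretic extension problems in Eilenberg--MacLane spaces, each admitting a filling whose Lipschitz cost is controlled by a constant depending only on the model. Summing the finitely many stage costs yields the desired $b_k(\widetilde Y)$, hence $b_k(Y)$, which combined with the cellular induction gives the uniform bound $b = b(d,Y)$.
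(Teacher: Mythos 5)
This statement is quoted from Calder--Siegel \cite{CS80} and Siegel--Williams \cite{SW89}; the paper does not reprove it, but only sketches the original argument: a homotopy between $f_0$ and $f_1$ is encoded as a lift of $(f_0,f_1)\colon X\to Y\times Y$ through the endpoint fibration $p\colon C(I,Y)\to Y\times Y$; finiteness of $\pi_1(Y)$ guarantees that the fibers of $p$ have the homotopy type of CW-complexes with finitely many cells in each dimension, and this finiteness is used to deform any lift, along the fibers of $p$, into a fixed compact subset $C_d\subset C(I,Y)$ depending only on $d=\dim X$; the uniform Lipschitz constant then comes from compactness. Your outer reduction (cellular induction over $X\times I$, reduction to a uniform width bound for a single cell, passage to the compact universal cover) is a sensible skeleton and not where the difficulty lies.

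The genuine gap is in the core lemma. A uniform bound $b_k(Y)$ on the time-Lipschitz constant of fillings is exactly the theorem itself for the pair $(D^k,\partial D^k)$ (equivalently, for null-homotopies of maps of spheres), so your reduction leaves the entire content to the Postnikov-tower step, and that step is not a proof. Postnikov stages and Eilenberg--MacLane spaces are not compact and carry no preferred metric, and the comparison maps between $\widetilde Y$ and its Postnikov stages are homotopy equivalences in a range, not maps with controlled Lipschitz constants; hence ``each stage admits a filling whose Lipschitz cost is controlled by the model'' has no meaning as stated, and there is no mechanism indicated for transporting any such bound back to the Riemannian manifold $Y$. Obtaining metric control from finite-type homotopy-theoretic data is precisely the point where Calder--Siegel do real work (the fiberwise deformation into a compact subset of the path space). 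Two further, more repairable, defects: (i) once the homotopy has been modified on $A\cup X^{(k-1)}$, the map $\psi_e$ on $\partial(\bar e\times I)$ is no longer the boundary restriction of the original homotopy, so its null-homotopy is not automatic; the induction must carry the additional statement that the new partial homotopy is homotopic, relative to $X\times\partial I$ and $A\times I$, to the original one. (ii) Controlling the radial Lipschitz constant of a filling of $S^k\cong\partial(\bar e\times I)$ does not control the time-Lipschitz constant of the induced map on $\bar e\times I$: no homeomorphism of pairs $(\bar e\times I,\partial)\to(D^{k+1},S^k)$ takes the time segments to radial segments, so the local lemma must be formulated relatively for the product $D^k\times I$, with the bound in the $I$-direction depending on the time-Lipschitz constant of the boundary data on $\partial D^k\times I$.
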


Theorem~\ref{t.CSW} is contained in Corollary~2.6 from \cite{SW89}.
In the case $A=\emptyset$, Theorem~\ref{t.CSW} was obtained previously in \cite{CS80}:
see Theorem~0.2 and Corollary~3.6 in that paper.

Let us give a brief sketch of the proofs,
which, similarly to the informal discussion above, 
involve lifting to a convenient larger space. 
For simplicity we discuss only the case $A = \emptyset$.
The pair of maps $f_0$, $f_1$ can be seen as a single map $(f_0,f_1) \colon X \to Y \times Y$.
Each homotopy between them corresponds to a lift of $(f_0,f_1)$
with respect to the fibration $p \colon C(I,Y) \to Y \times Y$ that sends a free path to its endpoints.
Finiteness of $\pi_1(Y)$ actually implies that the (infinite dimensional) fibers of $p$
have the homotopy type of a CW-complex with finitely many cells in each dimension.
Calder and Siegel use this property to deform maps 
$X \to C(I,Y)$ along the fibers of $p$ so the image becomes contained in 
a compact subset $C_d \subset C(I,Y)$ depending not on $X$, but only on its dimension $d$.
From compactness it is relatively simple to conclude the existence of a uniform Lipschitz constant.

\begin{remark}
If $F \colon X \times I \to Y$ is a given homotopy between $f_0$ and $f_1$ relative to $A$
then the $b$-Lipschitz homotopy $G \colon X \times I \to Y$ provided by Theorem~\ref{t.CSW}
is itself homotopic to $F$, relative to $A \times I \cup X \times \partial I$.
We will not use this fact, however.
\end{remark}

\subsection{Dynamical stratifications}\label{ss.strat}

In all this subsection, we assume that $X$ is a compact manifold of dimension $d > 0$, 
and $f \colon X \to X$ is a minimal diffeomorphism.
Our aim here is to describe certain decompositions of the space $X$ with 
good dynamical and topological properties.

Fix a compact set $K \subset X$ with nonempty interior.
Then, for each $x \in X$, let: 
\begin{align}
	\ell^+(x) &\coloneqq \min \{j \ge 0 ;\; f^j(x)    \in \interior K\} \, , \label{e.ell_plus}\\
	\ell^-(x) &\coloneqq \min \{j > 0   ;\; f^{-j}(x) \in \interior K\} \, , \label{e.ell_minus}\\
	\cL(x) &\coloneqq \{ j \in \Z ;\; -\ell^-(x) < j < \ell^+(x) , \ f^j(x) \in \partial K \} \, . \label{e.L}
\end{align}
By minimality, all these numbers are finite and uniformly bounded.
Following \cite[p.~75]{ABD2}, define closed sets: 
$$
X_i \coloneqq \{ x \in X ;\; \#\cL(x) \ge i\} \, ,
\quad i = 0,1,2,\dots
$$
The sequence 
$$
X = X_0 \supset X_1 \supset \cdots
$$
is called the \emph{dynamical stratification of $X$ associated to $K$.}
An example is pictured in Fig.~\ref{f.strat}.

\newcommand{\raio}  {.457}
\newcommand{\deltax}{.6571}
\newcommand{\deltay}{.2317}
\begin{figure}[htb]
	\begin{tikzpicture}[scale=6.5,font=\scriptsize]
		\begin{scope}
			\clip(-.5,-.5) rectangle (.5,.5); 
			
			\draw[blue, dashed](2*\deltax-2,2*\deltay-1) circle [radius=\raio]; 
			\draw[blue,very thick,domain= 29.03: 42.95] plot({2*\deltax-2+\raio*cos(\x)},{2*\deltay-1+\raio*sin(\x)});
				
			\draw[blue, dashed](2*\deltax-1,2*\deltay-1) circle [radius=\raio]; 
			\draw[blue,very thick,domain= 88.95:159.09] plot({2*\deltax-1+\raio*cos(\x)},{2*\deltay-1+\raio*sin(\x)});
			
			\draw[blue, dashed](2*\deltax-2,2*\deltay  ) circle [radius=\raio]; 

			\draw[blue, dashed](2*\deltax-1,2*\deltay  ) circle [radius=\raio]; 
			\draw[blue,very thick,domain=239.75:262.88] plot({2*\deltax-1+\raio*cos(\x)},{2*\deltay+\raio*sin(\x)});
			
			\draw[brown, dashed, name path=brown_SW](-\deltax  ,-\deltay  ) circle [radius=\raio]; 
			\draw[brown,very thick,domain= 59.75: 82.88] plot({-\deltax+\raio*cos(\x)},{-\deltay+\raio*sin(\x)});
			\draw[brown,very thick,domain=209.03:222.95] plot({-\deltax+\raio*cos(\x)},{-\deltay+\raio*sin(\x)});
			\draw[brown,very thick,domain=268.95:339.09] plot({-\deltax+\raio*cos(\x)},{-\deltay+\raio*sin(\x)});

			\draw[brown, dashed, name path=brown_SE](-\deltax+1,-\deltay  ) circle [radius=\raio]; 
			\draw[brown,very thick,domain= 59.75: 82.88] plot({-\deltax+1+\raio*cos(\x)},{-\deltay+\raio*sin(\x)});
			\draw[brown,very thick,domain=209.03:222.95] plot({-\deltax+1+\raio*cos(\x)},{-\deltay+\raio*sin(\x)});
			\draw[brown,very thick,domain=268.95:339.09] plot({-\deltax+1+\raio*cos(\x)},{-\deltay+\raio*sin(\x)});

			\draw[brown, dashed, name path=brown_NW](-\deltax  ,-\deltay+1) circle [radius=\raio]; 
			\draw[brown,very thick,domain= 59.75: 82.88] plot({-\deltax+\raio*cos(\x)},{-\deltay+1+\raio*sin(\x)});
			\draw[brown,very thick,domain=209.03:222.95] plot({-\deltax+\raio*cos(\x)},{-\deltay+1+\raio*sin(\x)});
			\draw[brown,very thick,domain=268.95:339.09] plot({-\deltax+\raio*cos(\x)},{-\deltay+1+\raio*sin(\x)});

			\draw[brown, dashed, name path=brown_NE](-\deltax+1,-\deltay+1) circle [radius=\raio]; 
			\draw[brown,very thick,domain= 59.75: 82.88] plot({-\deltax+1+\raio*cos(\x)},{-\deltay+1+\raio*sin(\x)});
			\draw[brown,very thick,domain=209.03:222.95] plot({-\deltax+1+\raio*cos(\x)},{-\deltay+1+\raio*sin(\x)});
			\draw[brown,very thick,domain=268.95:339.09] plot({-\deltax+1+\raio*cos(\x)},{-\deltay+1+\raio*sin(\x)});

			\draw[green!50!black, very thick, name path=green_SW](\deltax-1,\deltay-1) circle [radius=\raio]; 
			\draw[green!50!black, very thick, name path=green_SE](\deltax  ,\deltay-1) circle [radius=\raio]; 
			\draw[green!50!black, very thick, name path=green_NW](\deltax-1,\deltay  ) circle [radius=\raio]; 
			\draw[green!50!black, very thick, name path=green_NE](\deltax  ,\deltay  ) circle [radius=\raio]; 

			\draw[red, very thick, name path=red_C]( 0, 0) circle [radius=\raio];
			
			\fill[name intersections={of=red_C and green_SW, by={a,b}}] (a) circle (.2pt) (b) circle (.2pt);
			\fill[name intersections={of=red_C and green_NW, by={a,b}}] (a) circle (.2pt) (b) circle (.2pt);
			\fill[name intersections={of=red_C and green_NE, by={a,b}}] (a) circle (.2pt) (b) circle (.2pt);
			\fill[name intersections={of=red_C and brown_SW, by={a,b}}] (a) circle (.2pt) (b) circle (.2pt);
			\fill[name intersections={of=red_C and brown_NW, by={a,b}}] (a) circle (.2pt) (b) circle (.2pt);
			\fill[name intersections={of=red_C and brown_NE, by={a,b}}] (a) circle (.2pt) (b) circle (.2pt);
			\fill[name intersections={of=red_C and brown_SE, by={a,b}}] (a) circle (.2pt) (b) circle (.2pt);
			\foreach \x in {29.03,42.95}{\draw[fill] ({2*\deltax-2+\raio*cos(\x)},{2*\deltay-1+\raio*sin(\x)}) circle[radius=.2pt];}
			\foreach \x in {88.95,159.09}{\draw[fill] ({2*\deltax-1+\raio*cos(\x)},{2*\deltay-1+\raio*sin(\x)}) circle[radius=.2pt];}
			\foreach \x in {239.75,262.88}{\draw[fill] ({2*\deltax-1+\raio*cos(\x)},{2*\deltay+\raio*sin(\x)}) circle[radius=.2pt];}
				

		\end{scope} 

		\draw (-.5,-.5) rectangle (.5,.5); 
		


		\node[red, right] (zzero) at (.55,.12) {$\partial K$};
		\draw[thin, ->] (zzero) -- (.455,.12);
		
		\node[brown, right] (menosum) at (.55,.27) {$f^{-1}(\partial K)$};
		\draw[thin, ->] (menosum) -- (.51,.34);
		\draw[thin, ->] (menosum) -- (.51,.20);

		\node[blue, right] (dois) at (.55,-.04) {$f^{2}(\partial K)$};
		\draw[thin, ->] (dois) -- (.51,.04);
		\draw[thin, ->] (dois) -- (.51,-.12);

		\node[green!50!black, right] (um) at (.55,-.27) {$f(\partial K)$};
		\draw[thin, ->] (um) -- (.51,-.34);
		\draw[thin, ->] (um) -- (.51,-.20);

	\end{tikzpicture}
	\caption{An example of a dynamical stratification: $f$ is a translation of the torus $X = \R^2/\Z^2$ and $K$ is a disk such that $\bigcup_{j=0}^2 f^j(\interior K) = X$. The ``skeleton'' $X_1$ is formed by the thick lines, and is contained in the set $\bigcup_{j=-1}^2 f^j(\partial K)$, which is also pictured. The ``skeleton'' $X_2$ is formed by the thick dots.}\label{f.strat}
\end{figure}
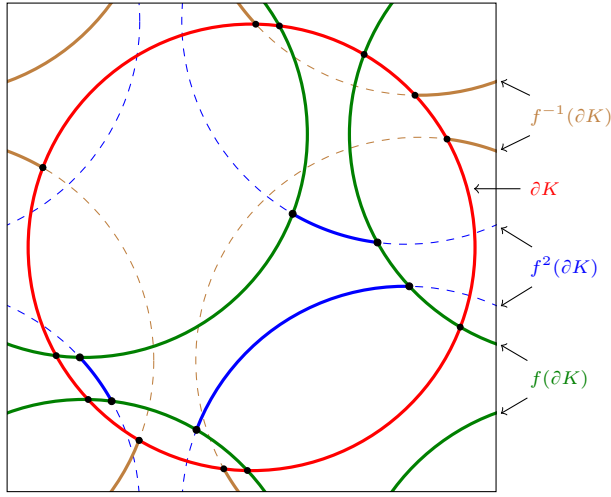

\begin{remark}
The usual definition of stratification also asks that each $X_i \setminus X_{i+1}$ is either empty or a submanifold of codimension $i$. We do not want to impose this requirement, although the stratifications we will actually use satisfy it.
\end{remark}

Recall that $I \coloneqq [0,1]$.
If $Y$ is a topological space and $A \subset Y$ is a closed set, 
then the pair $(Y,A)$ is said to have the \emph{homotopy extension property}
if $Y \times \{0\} \cup A \times I$ is a retract of $Y \times I$.
See either \cite[p.~25]{Arkowitz} or \cite[p.~15]{Hatcher} for 
a discussion of this property, including 
the equivalent characterization that explains its name.

\medskip

We say that a compact set $K\subset X$ of nonempty interior is \emph{regular},
or equivalently that the dynamical stratification $X_0 \supset X_1 \supset \cdots$
associated to it is \emph{regular} if the following two properties hold:
\begin{itemize}
\item $X_{d+1} = \emptyset$ (where $d \coloneqq \dim X$); 
\item for each $i\in \{0,1,\dots,d\}$, 
the pair $(X_i \cap K, X_{i+1} \cap K)$ has the homotopy extension property. 
\end{itemize}

It is not difficult to check that that the stratification of Fig.~\ref{f.strat}.
is regular. In general, regularity will be obtained by means of the following:

\begin{otherthm}\label{t.regularity}
Let $f \colon X \to X$ be a minimal diffeomorphism of 
a compact manifold $X$ of positive dimension.
Then every point in $X$ has a basis of 
neighborhoods consisting on regular embedded $d$-dimensional disks.
\end{otherthm}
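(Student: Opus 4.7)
The plan is to realize $K$ as a smooth embedded $d$-disk around the given point $x_0$ whose boundary $\partial K$ has been perturbed so that the finitely many iterates $f^j(\partial K)$ relevant to the stratification sit in general position. Once this is done, $K$ together with the traces of those iterates becomes a Whitney stratified subset of $K$ compatible with the filtration $X_0 \cap K \supset X_1 \cap K \supset \cdots$, and both clauses of regularity will follow.

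As a first step, I would use minimality to reduce to a finite combinatorial problem: fixing a small open neighborhood $V$ of $x_0$, minimality of $f$ together with compactness of $X$ yields an integer $N$ such that $\ell^+(x), \ell^-(x) \le N$ for every $x \in X$ whenever $V \subset \interior K$. Consequently $\mathcal{L}(x)\subset (-N,N)$, and the stratification only involves the hypersurfaces $f^j(\partial K)$ with $|j|<N$. I would then perturb the defining embedding $h : D^d\hookrightarrow X$ of $K$, keeping $V\subset \interior K$ throughout, so as to place the finite family $\{f^j(\partial K)\}_{|j|<N}$ in general position. By parametric transversality applied to the evaluation maps
\[
(\theta_j)_{j\in J},\, s \;\longmapsto\; \bigl(f^j(h_s(\theta_j))\bigr)_{j\in J}\in X^{|J|}
\]
taken over all nonempty $J\subset(-N,N)\cap\Z$, this reduces to exhibiting a family $(h_s)$ whose variations near distinct preimages of a potential coincidence point act independently on jets. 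The dynamical input that makes this possible is that a minimal diffeomorphism of a positive-dimensional manifold has no periodic points, so for any prospective intersection point $p \in \bigcap_{j\in J} f^j(\partial K)$ the preimages $f^{-j}(p)$ are pairwise distinct on $\partial K$, and perturbations of $h$ localized near these disjoint points decouple. A residual set of parameters $s$ then simultaneously achieves transversality for every $J$; since codimension-$|J|$ transverse intersections in the $d$-manifold $X$ are empty whenever $|J|>d$, the clause $X_{d+1}=\emptyset$ comes out for free.

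Finally, for such a generic $K$ every stratum $(X_i\setminus X_{i+1})\cap K$ is a smooth submanifold of $K$ of codimension $i$, all strata meet transversally, and $K$ is Whitney stratified in the standard sense. Triangulation theorems for Whitney stratified spaces (Goresky, Verona) provide a simplicial subdivision of $K$ in which every $X_i \cap K$ is a subcomplex; since inclusions of CW subcomplexes are cofibrations, each pair $(X_i\cap K, X_{i+1}\cap K)$ has the homotopy extension property. Because $K$ can be taken inside any prescribed neighborhood of $x_0$, this yields the claimed neighborhood basis. The main obstacle is the simultaneous transversality step: perturbing $h$ moves every iterate $f^j(\partial K)$ at once, and one must verify that at each prospective multiple intersection the various sheets are perturbed through independent jets. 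The absence of periodic points coming from minimality is precisely what provides this independence, and packaging it into a clean global genericity statement is the technical content that presumably occupies Appendix~\ref{s.appendix}.
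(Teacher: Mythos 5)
Your proposal is correct in outline, and its first half coincides with the paper's: Lemma~\ref{l.transverse_hits} obtains exactly your general-position statement by covering $\partial K$ with finitely many disks that are disjoint from their first $n$ iterates (this is where absence of periodic points enters), building a finite-dimensional family of perturbations via a partition of unity, and applying parametric transversality to maps into $X^{|J|}$ transverse to $f_J(\Delta_{|J|}X)$; your remark that $V\subset\interior K$ is kept throughout so that the relevant window of iterates is bounded plays the same role as the paper's choice $K\supset K_1$ with $N(K)\subset N(K_1)$, and $X_{d+1}=\emptyset$ comes out the same way. Where you genuinely diverge is in deducing the homotopy extension property. The paper never invokes Whitney stratification or triangulation theorems: it introduces the \emph{fine} stratification $(W_i)$ determined by all hits $f^j(x)\in\partial K$ with $j\in N(K)$ (i.e.\ the intersection arrangement itself), builds by hand a discontinuous, stratum-tangent vector field whose flow retracts $W_i\times I$ onto $W_i\times\{0\}\cup W_{i+1}\times I$ (Lemma~\ref{l.reg_strat_fine}), and then transfers regularity to the dynamical stratification via Lemma~\ref{l.frontier} and the Scholium. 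Your route --- general position $\Rightarrow$ Whitney stratified $\Rightarrow$ triangulable with strata as subcomplexes (Goresky/Verona) $\Rightarrow$ CW-pair cofibration --- is legitimate and shorter, at the cost of heavier machinery. The one step you assert but do not justify, and which is precisely the crux the paper's auxiliary apparatus is designed to handle, is the ``compatibility with the filtration'': the sets $X_i$ are \emph{not} the $i$-fold intersection loci of the hypersurfaces $f^j(\partial K)$, because $\cL(x)$ only counts hits inside the $x$-dependent window $(-\ell^-(x),\ell^+(x))$ (in Fig.~\ref{f.strat}, $X_1$ consists of only parts of the curves). So for your triangulation argument you must check that each $X_i\cap K$ is a closed union of strata of the arrangement stratification; this holds because on each connected arrangement stratum the sides of $\partial K$ occupied by the iterates $f^j(x)$, $j\in N(K)\setminus\cM(x)$, and hence $\ell^\pm$ and $\cL$, are constant (the content, in essence, of Lemma~\ref{l.loc_const} and Lemma~\ref{l.frontier}), but it should be spelled out rather than absorbed into the word ``compatible.''
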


The precise proof of the theorem is somewhat laborious,
and since its arguments are independent of the rest of the paper,
we present it separately in Appendix~\ref{s.appendix}.

\medskip

We will also need the following simple fact from \cite[p.~75]{ABD2},
whose proof we include for the reader's convenience:

\begin{lemma}\label{l.loc_const} 
The function $\ell^+$ is locally constant on each set $X_i \setminus X_{i+1}$.
\end{lemma}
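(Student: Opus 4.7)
The plan is to fix $x_0 \in X_i \setminus X_{i+1}$, set $n \coloneqq \ell^+(x_0)$ and $m \coloneqq \ell^-(x_0)$, and argue by contradiction that every nearby $x \in X_i$ satisfies $\ell^+(x) = n$. The case $n = 0$ is trivial, since then $x_0 \in \interior K$, an open set on which $\ell^+\equiv 0$. So I assume $n \ge 1$.

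The next step is to use that only finitely many iterates $f^j$ with $j \in (-m, n)$ come into play. Combining continuity of these finitely many iterates with the fact that both $\interior K$ and $X \setminus K$ are open, I can choose a neighborhood $U$ of $x_0$ so that every $x \in U$ satisfies: (i) $f^n(x) \in \interior K$ and $f^{-m}(x) \in \interior K$, which forces $\ell^+(x) \le n$ and $\ell^-(x) \le m$; and (ii) for each $j \in (-m, n)$ with $f^j(x_0) \in X \setminus K$, we also have $f^j(x) \in X \setminus K$. Note that for the remaining $j \in (-m, n)$ we know $f^j(x_0) \in \partial K$, by definition of $m$ and $n$.

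Now suppose, for contradiction, that some $x \in U \cap X_i$ has $\ell^+(x) = n' < n$. Then $f^{n'}(x) \in \interior K$, while $f^{n'}(x_0) \notin \interior K$ (by definition of $n$) and $f^{n'}(x_0) \notin X \setminus K$ (by (ii), which would force $f^{n'}(x) \in X \setminus K$). Hence $f^{n'}(x_0) \in \partial K$, i.e. $n' \in \cL(x_0)$. Exactly the same three-way case analysis applied to any $j \in \cL(x) \subseteq (-\ell^-(x), n') \subseteq (-m, n)$ gives $f^j(x_0) \in \partial K$, and so $\cL(x) \subseteq \cL(x_0)$. Since $\cL(x)$ requires the strict inequality $j < n'$, the element $n'$ lies in $\cL(x_0)$ but not in $\cL(x)$, yielding the strict containment $\cL(x) \subsetneq \cL(x_0)$ and hence $|\cL(x)| \le i - 1$, which contradicts $x \in X_i$. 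Consequently $\ell^+(x) = n$ on $U \cap X_i$, a fortiori on $U \cap (X_i \setminus X_{i+1})$.

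The only point requiring a little care, and the main obstacle to a purely mechanical proof, is the choice of $U$ in step (ii): it must be engineered so that no coordinate $f^j(x)$ can "sneak" into $\interior K$ or $\partial K$ from the open complement $X \setminus K$. Once this is arranged, the rest is an accounting argument showing that a drop in $\ell^+$ must be compensated by a strict loss of one element from $\cL$, which is incompatible with remaining in the stratum $X_i$.
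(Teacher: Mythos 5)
Your proof is correct and follows essentially the same route as the paper: both rest on the upper semicontinuity of $\ell^+$ and $\cL$ (which you encode by choosing the neighborhood $U$ via finitely many iterates, while the paper phrases it with a convergent sequence), and both derive the contradiction from the fact that a drop in $\ell^+$ forces the extra time $n'=\ell^+(x)$ into $\cL(x_0)\setminus\cL(x)$, which is incompatible with both points lying in the stratum of cardinality $i$.
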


\begin{proof} 
It is easy to see that $\cL$ and $\ell^+$ are upper semicontinuous on $X$,
that is, for any $x \in X$, if $y$ is sufficiently close to $x$ 
then $\cL(y) \subset \cL(x)$ and $\ell^+(y) \le \ell^+(x)$.
Assume for a contradiction that there exists
a sequence $(x_n)$ in $X_i \setminus X_{i+1}$ converging to some $x \in X_i \setminus X_{i+1}$
such that $\ell^+(x_n) < \ell^+(x)$ for each $n$.
By passing to a subsequence we can assume that $\ell^+(x_n) = k$ is independent of $n$.
On one hand, $f^k(x) \not\in \interior K$, and in the other hand
$f^k(x) = \lim f^k(x_n) \in \overline{\interior(K)} \subset K$,
showing that $f^k(x) \in \partial K$.
In particular, $k \in \cL(x) \setminus \cL(x_n)$ for each $n$.
However, both $\cL(x)$ and $\cL(x_n)$ have cardinality~$i$.
This contradiction proves the lemma.
\end{proof}

\section{Proof of Theorem~\ref{t.sections}} \label{s.proof_sections}

\subsection{Concentrating non-invariance}

Using regular dynamical stratifications,
we will construct sections that are invariant except on a small set.
These kind of sections were used in \cite{ABD1,ABD2}.
Here, for topological reasons, we need to keep track of not only one such section,
but a whole family of them, one for each automorphism along an isotopy.

\begin{lemma}\label{l.predom_inv}
Let $f \colon X \to X$ be a minimal diffeomorphism of 
a compact manifold $X$ of positive dimension.
Consider a fiberwise smooth bundle $\bundle{Z}{X}{p}{Y}$,
an automorphism $g \in \Aut(Z,f)$, 
a section $\sigma \in \Sec(Z)$ that is $g$-invariant up to isotopy, 
and a regular set $K \subset X$ contained in a trivializing domain of the bundle.
Then there exist:
\begin{itemize}
\item a continuous family of automorphisms $\{g_t\}_{t\in I} \subset \Aut(X, f)$
with $g_1 = g$;
\item a continuous family of sections $\{\phi_t\}_{t\in I} \subset \Sec(Z)$
with $\phi_0 = \sigma$;
\end{itemize}
such that:
\begin{equation}\label{e.predom_inv}
(x,t) \in (X \setminus \interior K) \times I \widecup X \times \{0\}
\ \Rightarrow \ g_t (\phi_t(x)) = \phi_t (f(x)) \, .
\end{equation}
\end{lemma}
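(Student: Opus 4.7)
The plan has two stages: first, replace the isotopy hypothesis with an honest invariance; then propagate from $K$ along the dynamics to build $\phi_t$. For the first stage, fix an ambient isotopy $\{\beta_s\}_{s\in I} \subset \Aut(Z,\id)$ with $\beta_0 = \id$ and $(\beta_1)_*\sigma = g_*\sigma$; setting $g_t := \beta_{1-t}^{-1} g$ yields a continuous path in $\Aut(Z,f)$ with $g_1 = g$ and $g_0 := \beta_1^{-1} g$ honestly preserving $\sigma$. The choice $\phi_0 := \sigma$ then takes care of the $t=0$ part of \eqref{e.predom_inv}. For the second stage I use the \emph{propagation formula}: for $y \in X$, let $n := \ell^+(y)$, so $f^n(y) \in \interior K$; set
\[
\phi_t(y) := g_t^{-n}\bigl(\phi_t(f^n(y))\bigr).
\]
Since $\ell^+(f(y)) = \ell^+(y) - 1$ whenever $y \in X \setminus \interior K$, this formula automatically gives $g_t(\phi_t(y)) = \phi_t(f(y))$ for such $y$, independently of how $\phi_t|_{\interior K}$ is chosen. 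Moreover, because $g_0$ preserves $\sigma$, the formula at $t = 0$ reproduces $\sigma$ on all of $X$, consistent with $\phi_0 = \sigma$.

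The remaining task is to pin down $\phi_t|_{K}$ continuously in $(x,t) \in K \times I$, with $\phi_0|_K = \sigma|_K$, so that the propagation formula glues continuously across $\partial K$. I will do this by descending induction on the regular dynamical stratification $K = K_0 \supset K_1 \supset \cdots \supset K_{d+1} = \emptyset$ with $K_i := X_i \cap K$, afforded by regularity of $K$ (Theorem~\ref{t.regularity}). On the deepest stratum $K_d$, where $\ell^+$ is locally constant by Lemma~\ref{l.loc_const}, the propagation formula applied to $\sigma$ provides a valid $\phi_t|_{K_d}$. At each subsequent step, the homotopy extension property of the pair $(K_i, K_{i+1})$ --- exactly the content of ``regular'' --- allows me to extend $\phi_t|_{K_{i+1}}$, together with the initial value $\sigma|_{K_i}$, to a continuous homotopy $\phi_t|_{K_i}$. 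Because $K$ lies inside a trivializing chart, each such step is a homotopy extension problem for maps into the fiber $Y$. After finitely many steps I obtain $\phi_t|_K$, and the propagation formula then extends it to $X$.

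The main obstacle is that at each inductive step, the HEP extension on the ``free'' part $(K_i \setminus K_{i+1}) \cap \interior K$ must approach, along $\partial K$, the values already forced by propagation from deeper strata. Maintaining this consistency requires the interplay between regularity of the stratification, the local constancy of $\ell^+$ on each $X_i \setminus X_{i+1}$, and the fact that within a trivializing chart the propagation formula depends continuously on the base point. This argument is close in spirit to the tower-based constructions of \cite{ABD1,ABD2}, recast here for the nonlinear fiber setting by means of the regular stratification and HEP.
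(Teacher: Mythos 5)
Your stage 1 (straightening the isotopy so that $g_0$ honestly preserves $\sigma$) and your propagation formula are exactly the paper's ingredients, but the order in which you propose to run the construction has a genuine gap, and it sits precisely at the point you flag as ``the main obstacle'' without resolving it. You want to first build $\phi_t$ on all of $K\times I$ by an HEP induction over $K_i=X_i\cap K$, and only afterwards extend to $X$ by $\phi_t(y)=g_t^{-\ell^+(y)}(\phi_t(f^{\ell^+(y)}(y)))$. For the global formula to be continuous (and for \eqref{e.predom_inv} to hold on $\partial K\subset X\setminus\interior K$), the values of $\phi_t$ on $\partial K$ are \emph{forced}: at every $z\in\partial K$ one must have $\phi_t(z)=g_t^{-\ell^+(z)}(\phi_t(f^{\ell^+(z)}(z)))$, since $\ell^+$ jumps exactly along preimages of $\partial K$ and nearby orbits re-enter $\interior K$ earlier. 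But the homotopy extension property of $(K_i,K_{i+1})$ only lets you extend from $K_i\times\{0\}\cup K_{i+1}\times I$; it gives you no control over the values the extension takes on $\partial K\cap(K_i\setminus K_{i+1})$, which is not part of that subspace. So a generic HEP extension over $K_i$ will violate the forced boundary relation, and then either \eqref{e.predom_inv} fails on $\partial K$ or the propagated section is discontinuous there. The constraint is also circular with respect to your ordering: the forced values at level-$i$ points outside $\interior K$ depend on the (free) values at level-$i$ points inside $\interior K$, so ``all of $K$ first, then propagate'' cannot be made consistent just by invoking regularity, Lemma~\ref{l.loc_const}, and continuity of propagation in a chart.

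The paper's proof removes this obstruction by interleaving the two operations stratum by stratum over the \emph{whole} manifold, not over $K$: assuming $\phi$ is built on $X\times\{0\}\cup X_{i+1}\times I$, one uses the retraction from the HEP of $(X_i\cap K, X_{i+1}\cap K)$, composed with the trivialization, to extend $\phi$ freely only over $(X_i\cap\interior K)\times I$ (where no propagation constraint exists, since $\ell^+=0$ there), and then defines $\phi$ on the rest of $X_i\times I$ by the propagation formula; well-definedness uses that $\cL(f^{\ell^+(x)}(x))$ has the same cardinality as $\cL(x)$, and continuity at points of $X_{i+1}$ is checked by the subsequence argument based on Lemma~\ref{l.loc_const}. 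If you reorganize your induction this way --- free extension on $X_i\cap\interior K$, forced propagation on $X_i\setminus\interior K$, at each level $i$ --- your argument becomes the paper's; as written, the central consistency step is asserted rather than proved, and the proposed order of construction does not go through.
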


\begin{proof}
Let $\{\hat{g}_t\}_{t \in I} \subset \Aut(X, \id)$ be an ambient isotopy
that moves the section $\sigma$ to the section $g_* \sigma$, that is,
$$
\hat{g}_0 = \id \quad \text{and} \quad (\hat{g}_1)_*(g_*(\sigma)) = \sigma \, .
$$
Define a continuous family $\{g_t\}_{t\in I}$ in $\Aut(X, f)$ by
$g_t \coloneqq \hat{g}_{1-t} \circ g$.
Then
$$
g_1 = g \quad \text{and} \quad (g_0)_*(\sigma) = \sigma \, .
$$

We want to define a map $\phi \colon X \times I \to Z$ such that
$\phi_t = \phi(\mathord{\cdot},t)$ are sections
satisfying \eqref{e.predom_inv}.
Since we also want $\phi_0 = \sigma$,
we start defining $\phi$ on $X \times \{0\}$ by $\phi(x,0) \coloneqq \sigma(x)$.
The extension to $X \times I$ will be made by a inductive procedure with $d+1$ steps, 
where $d \coloneqq \dim X$.
Consider the regular dynamical stratification associated to the set $K$:
$$
X = X_0 \supset X_1 \supset \cdots \supset X_{d+1} = \emptyset \, .
$$ 
Let $i \in \{0,\dots,d\}$ and assume
that $\phi$ is already continuously defined on $X \times \{0\} \widecup X_{i+1} \times I$,
and that it satisfies condition \eqref{e.predom_inv} where it makes sense.
(Notice that this assumption is already met for $i=d$, which is the starting point of the induction.)
We will explain how to extend $\phi$ to $X \times \{0\}\widecup X_i \times I$.

By regularity of the stratification, 
the pair $(X_i \cap K, X_{i+1} \cap K)$ has the homotopy extension property.
This means that there exists a retraction 
$$
r \colon (X_i\cap K) \times I \to L , \quad \text{where } 
L\coloneqq  (X_i\cap K) \times \{0\} \widecup (X_{i+1}\cap K) \times I \, .
$$
Notice that $L$ is the intersection of the domain of $r$
and the current domain of $\phi$.
Since $K$ is contained in a trivializing domain of the fiber bundle,
there exists a homeomorphism $h \colon K \times Y \to p^{-1}(K)$ 
such that $p \circ h$ equals the projection on the first factor.
Then there is a unique map 
$$
\eta \colon L  \to Y \quad \text{such that} \quad
\phi(x,t) = h(x,\eta(x,t)) \text{ for every } (x,t) \in L.
$$
We extend $\phi$ to an intermediate domain
\begin{equation}\label{e.intermediate}
X \times \{0\} \widecup \big(X_{i+1} \cup (X_i \cap \interior K) \big) \times I 
\end{equation}
by setting
$$
\phi(x,t) \coloneqq h \big( x ,  \eta \circ r(x,t)  \big) 
\quad \text{for } (x,t) \in (X_i \cap \interior K) \times I \, .
$$ 
Notice that this is coherent with the previously defined values of $\phi$,
and the new map $\phi$ is continuous.
For the new points in the domain of $\phi$, condition \eqref{e.predom_inv} is vacuously verified.

\medskip

To complete the induction step, we extend $\phi$ from the intermediate domain \eqref{e.intermediate} 
to $X \times \{0\} \cup X_i \times I$ by letting
$$
\phi(x,t) \coloneqq g_t^{-\ell^+(x)} \Big( \phi \big(  f^{\ell^+(x)}(x) \big) , t \Big) \quad \text{for every }
(x,t) \in (X_i \setminus X_{i+1}) \times I ;
$$ 
note that this map is well-defined because $\cL(f^{\ell^+(x)}(x))$ always has the same cardinality as $\cL(x)$,
and extends the previous $\phi$ because $\ell^+$ vanishes on $\interior K$ 
and $\phi_0$ is $g_0$-invariant.
This extension evidently keeps property \eqref{e.predom_inv} true where it makes sense.
Let us check that this new $\phi$ is continuous. 
It is sufficient to show that $\phi | X_i \times I$ is continuous.
Actually, 
it is  sufficient to prove continuity on points $(x,t) \in X_{i+1} \times I$,
since  $\ell^+$ is continuous on $X_i \setminus X_{i+1}$ by Lemma~\ref{l.loc_const}.

Take a sequence $(x_n,t_n)$ in $X_i \times I$ converging to $(x,t)$.
We can break the sequence $(x_n)$ into finitely many subsequences,
where each subsequence is either contained in $X_{i+1}$, 
or is contained in $X_i \setminus X_{i+1}$ and has a constant value of $\ell^+$.
Using that $\phi$ is continuous on the domain \eqref{e.intermediate},
it follows that $\phi(x_{n_k},t_{n_k}) \to \phi(x,t)$ for each of those subsequences $(x_{n_k})$.
Therefore the new map $\phi$ is continuous.

The induction stops after $d+1$ steps, 
when the map $\phi$ is defined on the set $X \times \{0\} \widecup X_0 \times I  = X \times I$.
The lemma is proved.
\end{proof}

\subsection{Dissipating non-invariance along a tower}

We have seen in Lemma~\ref{l.predom_inv} how to find sections
whose non-invariant part is concentrated in a small set $K$.
Next we want to ``dissipate'' this non-invariant part 
to a high tower $K \sqcup f(K) \sqcup \cdots \sqcup f^n(K)$
and in this way obtain an almost invariant section.
A major problem is that the sections we are working with may be extremely
twisted, and here is where the quantitative homotopy Theorem~\ref{t.CSW}
comes in handy.

\begin{proof}[Proof of Theorem~\ref{t.sections}]
Fix the diffeomorphism $f$,
the bundle $\bundle{Z}{X}{p}{Y}$, 
the automorphism $g$, 
and the section $\sigma$ 
satisfying the hypotheses of the theorem.
Let $\epsilon>0$ be arbitrary.

We endow $Y$ with a Riemannian structure.
Let $d = \dim X$, and let $b = b(d,Y)$ be given by Theorem~\ref{t.CSW}.

Let $U_0 \subset X$ be a trivializing domain for the fiber bundle.
Then there exists a homeomorphism $h \colon U_0 \times Y \to p^{-1}(U_0)$
such that for each $x \in U_0$, 
the map $h (x, \mathord{\cdot})$ is a diffeomorphism
from $Y$ to the fiber $Z_x = p^{-1}(x)$.
Take an open $U \neq \emptyset$ such that $\overline{U} \subset U_0$.
Let $c>0$ be an upper bound 
for the Lipschitz constants of all maps 
$h (x, \mathord{\cdot})$ with $x \in U$. 
Fix an integer 
$$
n > \frac{c b}{\epsilon} \, .
$$ 

By Theorem~\ref{t.regularity},
we can choose a regular embedded closed $d$-dimensional disk $K \subset U$
sufficiently small so that it is disjoint from its $n$ first iterates.
We apply Lemma~\ref{l.predom_inv} to this set $K$,
and so obtain continuous 
families $\{g_t\}_{t\in I} \subset \Aut(X, f)$ and $\{\phi_t\}_{t\in I} \subset \Sec(Z)$
such that $g_1 = g$, $\phi_0 = \sigma$, and 
\begin{equation}\label{e.predom_inv_again}
(x,t) \in (X \setminus \interior K) \times I \widecup X \times \{0\}
\ \Rightarrow \ g_t (\phi_t(x)) = \phi_t (f(x)) \, .
\end{equation}
It follows that the sections
$((g^{-n}_t)_* \phi_t)(x) = g_t^{-n}(\phi_t(f^n(x)))$
satisfy:
\begin{equation}\label{e.tower}
((g^{-n}_t)_* \phi_t)(x) = \phi_t(x) \quad \text{if} \quad x \not\in\bigcup_{j=0}^{n-1} f^{-j}(\interior K)
\quad \text{or} \quad t = 0.
\end{equation}

\begin{lemma}\label{l.cube}
The restrictions of the sections $\phi_1$ and $g^{-n}_* \phi_1$ to the disk $K$ are fibered homotopic 
relative to $\partial K$.
In other words, there exists a continuous map $\eta \colon K \times I \to Y$ such that
$$
\phi_1(x)            = h(x, \eta(x,0)) , \quad 
(g^{-n}_* \phi_1)(x) = h(x, \eta(x,1))   \quad \text{for all } x \in K , 
$$
and moreover if $x \in \partial K$ then $\eta(x,s)$ does not depend on $s \in I$.
\end{lemma}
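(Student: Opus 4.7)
The plan is to push everything into the trivialization $h : K \times Y \to p^{-1}(K)$, obtaining continuous families of maps $\alpha_t, \beta_t : K \to Y$ via $\phi_t|_K(x) = h(x, \alpha_t(x))$ and $((g_t^{-n})_* \phi_t)|_K(x) = h(x, \beta_t(x))$, and then to reduce the desired relative homotopy between $\alpha_1$ and $\beta_1$ to a single null-homotopy question on the $d$-sphere.

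Two preliminary observations feed the argument. (i) At $t = 0$ the family produced in Lemma~\ref{l.predom_inv} satisfies $(g_0)_* \sigma = \sigma$, so $(g_0^{-n})_* \phi_0 = \phi_0$ and hence $\alpha_0 = \beta_0$ on all of $K$. (ii) For any $t \in I$ and any $x \in \partial K$, the points $x, f(x), \dots, f^{n-1}(x)$ all lie outside $\interior K$, because $K$ was chosen disjoint from its first $n$ forward iterates; iterating \eqref{e.predom_inv_again} then gives $\phi_t(x) = g_t^{-n}(\phi_t(f^n(x)))$, that is, $\alpha_t|_{\partial K} = \beta_t|_{\partial K}$ for every $t$.

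The key move is a \emph{doubling} trick. Identifying $K$ with the standard $d$-disk, glue two copies $K_+, K_-$ of $K$ along $\partial K$ to obtain $S^d$, and define $\gamma_t : S^d \to Y$ by $\gamma_t = \alpha_t$ on $K_+$ and $\gamma_t = \beta_t$ on $K_-$. Observation (ii) ensures continuity across the equator, and $(x, t) \mapsto \gamma_t(x)$ is jointly continuous. By observation (i), $\gamma_0$ factors through the fold map $S^d \to K$ (identity on each hemisphere), which is null-homotopic since $K$ is contractible; thus $\gamma_0$ is null-homotopic. Being the other endpoint of the path $\{\gamma_t\}_{t\in I}$ in $C(S^d, Y)$, the map $\gamma_1$ is null-homotopic too.

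The lemma then follows from the classical obstruction-theoretic fact that two maps $D^d \to Y$ that agree on $\partial D^d$ are homotopic relative to $\partial D^d$ if and only if their union on $S^d$ is null-homotopic; applied to $\alpha_1, \beta_1$, this delivers the homotopy $\eta$ required. The only non-mechanical step is spotting the doubling trick; once the problem is recast as null-homotopy of $\gamma_1$, everything---the two boundary identities and the fold argument---is routine, so I do not expect any substantial obstacle.
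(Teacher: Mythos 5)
Your proof is correct. The two preliminary observations are exactly the content of the paper's display \eqref{e.tower}: the families agree at $t=0$ because $(g_0)_*\sigma=\sigma$, and they agree on $\partial K$ for all $t$ because the first $n$ forward iterates of any boundary point avoid $\interior K$, so \eqref{e.predom_inv_again} can be iterated. Where you diverge is only in the last, purely topological step. The paper works directly on the cube $K\times I\times I$: it uses the homotopy extension property of the pair $\big(K\times I,\partial(K\times I)\big)\cong(D^{d+1},S^d)$ to extend the map already defined on five faces, and reads off $\eta$ on the remaining face $t=1$. You instead double $K$ along $\partial K$ to form $S^d$, observe that the doubled maps $\gamma_t$ give a free homotopy from the null-homotopic $\gamma_0$ (fold through the contractible $K$) to $\gamma_1$, and then quote the classical criterion that two maps of $D^d$ agreeing on $\partial D^d$ are homotopic rel $\partial D^d$ as soon as their double on $S^d$ is (freely) null-homotopic. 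That criterion is valid with no hypotheses on $Y$ and in the unbased setting — a map $S^d\to Y$ extends over $D^{d+1}$ iff it is null-homotopic, applied to $\partial(K\times I)$ — but note that its standard proof is precisely the cube-filling/HEP argument the paper performs explicitly; so the two routes have the same topological content, with yours outsourcing the extension step to a quotable classical fact and the paper keeping it self-contained (in the same style it reuses later, when showing $\tilde\omega\simeq\phi_1$ over each disk $f^j(K)$).
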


\begin{proof}
The pair $(K,\partial K)$ is homeomorphic to $(D^d, S^{d-1})$ -- the unit disk and the unit sphere in $\R^d$.
Therefore the pair 
$$
\big( K \times I, \  \partial(K\times I) \big) =  
\big( K \times I, \  \partial K \times I \widecup K \times\{0,1\} \big) 
$$ 
is homeomorphic to $(D^{d+1}, S^d)$.
It is well-known that the latter pair has the homotopy extension property
(see e.g.\ \cite[p.~9]{Arkowitz} or \cite[p.~15]{Hatcher}),
and thus so has the former.
This means that there exists a retraction $r \colon K \times I \times I \to C$, where
$$
C \enspace{\coloneqq}\enspace K \times I \times \{0\} \widecup \partial K \times I \times I \widecup K \times\{0,1\} \times I \, .
$$
Define continuous maps $\xi_0$, $\xi_1 \colon K \times I \to Y$ by:
$$
\phi_t(x)               = h(x, \xi_0(x,t)) \, , \quad 
((g^{-n}_t)_* \phi_t)(x) = h(x, \xi_1(x,t)) \, . 
$$
It follows from \eqref{e.tower} that
$$
\xi_0 = \xi_1 \quad \text{on} \quad K \times \{0\} \widecup \partial K \times I \, .
$$
Define a map $\xi \colon C \to Y$ by
$$
\xi(x,s,t) \coloneqq
\begin{cases}
	\xi_0(x,t) &\quad\text{if } (x,t) \in K \times \{0\} \widecup \partial K \times I \, , \\
	\xi_s(x,t) &\quad\text{if } s\in \{0,1\} \, .
\end{cases}
$$
Notice that $\xi$ is well-defined and continuous.
Extend $\xi$ to $K \times I \times I$ by imposing $\xi = \xi \circ r$.
The announced map $\eta$ is $\eta(x,s) \coloneqq \xi(x,s,1)$.
\end{proof}

Applying the Calder--Siegel--Williams Theorem~\ref{t.CSW}, 
we find another homotopy $\zeta \colon K \times I \to Y$
relative to $\partial K$ between 
the maps $\eta(\mathord{\cdot},0)$ and $\eta(\mathord{\cdot},1)$ 
with the additional property of being $b$-Lipschitz.

The desired section $\omega \colon X \to Z$ is defined as follows: 
$$
\omega(x) \coloneqq
\begin{cases}
g^j \left( h \left( f^{-j}(x), \zeta \left( f^{-j}(x), \frac{j}{n} \right) \right)\right)
&\text{if } x \in f^j(\interior K), \ 0 \le j \le n. \\
\phi_1(x) 
&\text{if } x \not\in\bigcup_{j=0}^{n} f^j(\interior K) \, .
\end{cases}
$$
Continuity follows from the fact that the homotopy $\zeta$ is relative to $\partial K$.
Notice that $\omega$ also coincides with $\phi_1$ on $K \cup f^n(K)$.

Let us check almost-invariance of $\omega$.
If $x \not\in\bigcup_{j=0}^{n-1} f^j(\interior K)$
then by \eqref{e.predom_inv_again}, $\phi_1(f(x)) = g(\phi_1(x))$,
that is, 
$$
\omega(f(x)) = g(\omega(x)).
$$
If, on the other hand, $x \in f^j(\interior K)$ for some $j \in \{0,1,\dots,n-1\}$ then
\begin{align*}
\omega(f(x))   &\coloneqq g^{j+1} \left( h \left( f^{-j}(x), \zeta \left( f^{-j}(x), \tfrac{j+1}{n} \right) \right)\right) \, ,
\\
g(\omega(x)) &\coloneqq g^{j+1} \left( h \left( f^{-j}(x), \zeta \left( f^{-j}(x), \tfrac{j}{n} \right) \right)\right) \, .
\end{align*}
Using that the maps $\zeta(f^{-j}(x), \mathord{\cdot})$ and
$h(f^{-j}(x) , \mathord{\cdot})$ 
are respectively $b$-Lipschitz and $c$-Lipschitz,
and that the map $g^{j+1}$ is isometric along fibers of $Z$,
it follows that
$$
\mathrm{d}_x \big( \omega(f(x)) , g(\omega(x)) \big) \le 
\frac{cb}{n} < \epsilon \, .
$$
This shows that the section $\omega$ is $\epsilon$-almost invariant under $g$.

Finally, let us check that $\omega$ and $\sigma = \phi_0$ are fibered homotopic.
Notice that if we replace the fraction $j/n$ that appears in the definition of $\omega$ by $1$ 
we obtain the following section, which is fibered homotopic to $\omega$:
$$
\tilde\omega(x) \coloneqq
\begin{cases}
(g_*^{-(n-j)} \phi_1)(x)
&\text{if } x \in f^j(\interior K), \ 0 \le j \le n. \\
\phi_1(x) 
&\text{if } x \not\in\bigcup_{j=0}^{n} f^j(\interior K) \, .
\end{cases}
$$
For each $j$ with $0 \le j \le n$,
the restrictions of the sections $\phi_1$ and $g^{-(n-j)}_* \phi_1$ to the disk $f^j(K)$ 
are fibered homotopic relative to $\partial f^j(K)$,
by the exact same argument of the proof of Lemma~\ref{l.cube}.
It follows that the sections $\tilde \omega$ and $\phi_1$ are fibered homotopic.
Since the latter is obviously fibered homotopic to $\phi_0 = \sigma$,
we conclude that $\omega$ and $\sigma$ are fibered homotopic, as announced.
This ends the proof of Theorem~\ref{t.sections}.
\end{proof}

\section{Further comments and questions}\label{s.examples}

In this section we collect a number of examples 
that illustrate and test the sharpness of Theorems~\ref{t.densedom} and \ref{t.sections}.
Some questions are posed along the way.

\subsection{A space of cocycles containing classes of both types}\label{ss.two_types}

We will give an example where $\Aut(\E,f)$ contains fibered homotopy classes of both types 
(\ref{i.obstruction}) and (\ref{i.densedom}) in Theorem~\ref{t.densedom}, and therefore such that domination is neither empty nor dense. 

Since the sphere $S^3$ is a Lie group containing circle subgroups,
by a theorem of Fathi and Herman \cite{FH},
there exists a minimal diffeomorphism $f \colon S^3 \to S^3$ that is homotopic to the identity. 
Let $\E$ be the trivial bundle $S^3 \times \R^3$. 
Then $\Aut(\E,f) = C(S^3, \GL(3,\R))$.

We regard the sphere $S^3$ as the group of unit quaternions in $\mathbb{H} = \R^4$,
the space $\R^3$ as the set of purely imaginary quaternions, and $S^2$ as $S^3 \cap \R^3$.
Let $\rho \colon S^3 \to \SO(3)$ be the homomorphism (and also a double covering)
that associates to each unit quaternion $q \in S^3$ the orthogonal linear map $v \in \R^3 \mapsto q^{-1}vq \in \R^3$ (see \cite[p.~105]{Thurston}, \cite[p.~75]{GHV}).
For each $n \in \Z$, let $P_n \colon S^3 \to S^3$ be the power map $q \mapsto q^n$,
and let $A_n$ be the composition of the following maps:
$$
S^3 \xrightarrow{P_n} S^3 \xrightarrow{\rho} \SO(3) \hookrightarrow \GL(3,\R) \, .
$$
Let $\cC_n$ be the fibered homotopy class of $A_n$.

\begin{fact}\label{fa.two_types}
The class $\cC_n$ is of type (\ref{i.obstruction}) if $n \neq 0$, 
and of type (\ref{i.densedom}) if $n = 0$.
\end{fact}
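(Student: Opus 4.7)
The case $n=0$ is immediate from Theorem~\ref{t.densedom}: the cocycle $A_0\equiv \id$ (since $P_0$ is constant equal to $1$ and $\rho(1)=\id$), so every constant plane field is $A_0$-invariant, and hence $\cC_0$ cannot be of type~(\ref{i.obstruction}) and must be of type~(\ref{i.densedom}). The substance of the fact is the case $n\neq 0$, where the plan is to produce a topological obstruction that rules out \emph{any} proper nontrivial invariant subbundle for \emph{any} $A\in\cC_n$, thereby placing $\cC_n$ in type~(\ref{i.obstruction}). The relevant invariant is the homotopy class $[A]\in \pi_3(\GL(3,\R))\cong \pi_3(\SO(3))\cong \Z$, the identifications using that $\GL^+(3,\R)$ deformation-retracts onto $\SO(3)$ and that the universal cover $\rho$ is an isomorphism on $\pi_3$ (with $[\rho]$ the chosen generator).

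As a preliminary I would compute $[A_n]$. Since $S^3$ is a Lie group, the power map $P_n$ multiplies $\pi_3(S^3)$ by $n$, so $[A_n]=\rho_*[P_n]=n$ under the identification above; in particular the classes $\cC_n$ are pairwise distinct. Now suppose for contradiction that some $A\in\cC_n$ with $n\neq 0$ admits a continuous invariant $k$-plane field, $k\in\{1,2\}$. This amounts to a section $\bar F\colon S^3\to \Gr(k,3)$ satisfying $\bar F\circ f = \bar A\cdot\bar F$, where $\bar A(x)$ denotes the induced diffeomorphism of $\Gr(k,3)$. Since $\Gr(k,3)$ is diffeomorphic to $\mathbb{R}\mathrm{P}^2$ in both cases and $f\simeq\id$, this forces the homotopical identity $[\bar A\cdot\bar F]=[\bar F]$ in $[S^3,\Gr(k,3)]=\pi_3(\mathbb{R}\mathrm{P}^2)=\Z$.

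To turn this into an equation in $\pi_3(\SO(3))$ I would use the principal fibration $\O(2)\hookrightarrow \SO(3)\xrightarrow{\pi}\Gr(1,3)$ (and its analogue for $k=2$). The lifting obstructions $H^{j+1}(S^3;\pi_j(\O(2)))$ all vanish for dimensional reasons, so $\bar F$ admits a lift $\tilde F\colon S^3\to \SO(3)$. After deforming $A$ to land in $\SO(3)$ (legitimate since $\GL^+(3,\R)$ retracts onto $\SO(3)$) one has $\bar A\cdot\bar F = \pi\circ(A\cdot\tilde F)$, where $A\cdot\tilde F\colon S^3\to \SO(3)$ is the pointwise group product. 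Because $\SO(3)$ is an H-space, this pointwise product corresponds to addition in $\pi_3$, giving $[A\cdot\tilde F]=[A]+[\tilde F]$. The long exact sequence of the fibration makes $\pi_*$ an isomorphism $\pi_3(\SO(3))\xrightarrow{\sim}\pi_3(\Gr(1,3))$, and applying it to $[\bar A\cdot\bar F]=[\bar F]$ yields $\pi_*[A]=0$, hence $[A]=0$, contradicting $[A]=n$. The main subtlety, which is really bookkeeping rather than a genuine obstacle, is to justify the reduction from $\GL(3,\R)$-valued to $\SO(3)$-valued maps and to handle the case $k=2$ via the contragredient action on $\Gr(2,3)\cong \Gr(1,3)$; once these identifications are set up, the core H-space computation is very short.
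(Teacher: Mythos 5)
Your argument is correct, and it reaches the conclusion by a genuinely different mechanism than the paper. The paper never lifts to the group: it orients the invariant line field (using simple connectivity of $S^3$) to get a map $h \colon S^3 \to S^2$, classifies such maps by the Hopf invariant via the explicit conjugation models $h_m(q) = q^{-m}\mathbf{i}\,q^m$ (its Fact~\ref{fa.Hopf}), and then the quaternionic identity $q^{-n} h_m(q) q^n = h_{m+n}(q)$ gives $h_m \simeq h_{m+n}$, hence $n=0$. You instead lift the section of the Grassmannian bundle to an $\SO(3)$-valued map and compute in $\pi_3(\SO(3))$, using the exact sequence of $\O(2) \hookrightarrow \SO(3) \to \Gr(1,3)$ and the H-space rule $[A\cdot\tilde F]=[A]+[\tilde F]$; both routes ultimately rest on the same input, namely that the power map $P_n$ has class $n$ in $\pi_3(S^3)$. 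Your version trades the Hopf-invariant computation for a lifting step plus the fibration isomorphism $\pi_3(\SO(3)) \xrightarrow{\sim} \pi_3(\Gr(1,3))$, which is arguably more systematic and generalizes more readily; the paper's version is more hands-on and needs no obstruction theory. One point you should make explicit: you pass from the free homotopy $\bar A\cdot\bar F \simeq \bar F$ to an equation in $\pi_3$, which uses that the $\pi_1(\Gr(k,3))\cong\Z_2$ action on $\pi_3(\Gr(k,3))\cong\Z$ is trivial (true, because the deck transformation of the cover $S^2 \to \Gr(1,3)$ is the antipodal map, which multiplies the Hopf class by $\deg^2=1$); without this, you would only get $n+m=\pm m$, which does not force $n=0$. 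The same sign issue is implicitly (and harmlessly, for the same $\deg^2$ reason) present in the paper's choice of orientation of the line field. Finally, for $k=2$ your contragredient reduction works since $A^{-T}\simeq A_n^{-T}=A_n$, though you could equally rerun your argument verbatim with the fibration $\SO(3)\to\Gr(2,3)$, whose stabilizer is again isomorphic to $\O(2)$.
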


Before proving this, we need to establish the following:

\begin{fact}\label{fa.Hopf}
For each continuous map $h \colon S^3 \to S^2$ there exists a unique $n \in \Z$ such that
$h$  is homotopic to the map $h_n(q) \coloneqq (A_n(q))(\mathbf{i})$,
where $\mathbf{i} = (0,1,0,0)$ is the first imaginary unit in quaternionic space $\mathbb{H} = \R^4$.
\end{fact}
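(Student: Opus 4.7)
The plan is to reduce the claim to the classical fact that $\pi_3(S^2) \cong \Z$, with the isomorphism realized by the Hopf invariant $H$. Since every homotopy class of maps $S^3 \to S^2$ is determined by its Hopf invariant, it suffices to verify that as $n$ varies over $\Z$, the integer $H(h_n)$ takes each value exactly once.

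I would first observe the factorization $h_n = h_1 \circ P_n$, since $h_n(q) = (q^n)^{-1}\mathbf{i}\, q^n = h_1(P_n(q))$. The map $h_1(q) = q^{-1}\mathbf{i}\, q$ is a representative of the Hopf fibration $S^3 \to S^2$, whose fibers are the cosets of the centralizer of $\mathbf{i}$ in $S^3$, namely the circle subgroup $\{a+b\mathbf{i}\}$. In particular $H(h_1) = \pm 1$, so $h_1$ generates $\pi_3(S^2)$.

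The main technical step is to compute $\deg(P_n) = n$. Writing an arbitrary unit quaternion in ``polar'' form $q = \cos\theta + \sin\theta \cdot v$ with $\theta \in [0,\pi]$ and $v \in S^2 \subset \R^3$ a unit imaginary quaternion, a direct calculation yields $P_n(q) = \cos(n\theta) + \sin(n\theta)\cdot v$. I would choose a regular value $q_0 = \cos\alpha + \sin\alpha \cdot v_0$ with small $\alpha \in (0,\pi)$ and count signed preimages, tracking how the parametrization folds whenever $\sin(n\theta)$ crosses zero: the coordinate $v$ flips to $-v$ while the $\theta$-direction simultaneously reverses, so each local contribution to the degree is $+1$, and one finds exactly $n$ preimages for $n \geq 1$. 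For $n=0$, $P_0$ is constant and $h_0$ is constant. For $n<0$, factor $P_n = P_{|n|} \circ J$ where $J(q) = q^{-1} = \bar q$ is the restriction to $S^3$ of $(x_0,x_1,x_2,x_3) \mapsto (x_0,-x_1,-x_2,-x_3)$, an orientation-reversing linear map of $\R^4$; this gives $\deg(P_n) = n$ for every $n \in \Z$.

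Finally I would apply the product formula $H(\psi \circ \phi) = \deg(\phi)\cdot H(\psi)$ for $\phi \colon S^3 \to S^3$ and $\psi \colon S^3 \to S^2$, immediate from the de Rham description $H(\psi) = \int_{S^3}\beta\wedge d\beta$ with $\psi^{*}\omega_{S^2} = d\beta$. This gives $H(h_n) = n \cdot H(h_1) = \pm n$, so the assignment $n \mapsto [h_n]$ is a bijection $\Z \to \pi_3(S^2)$, proving both existence and uniqueness of $n$ with $h$ homotopic to $h_n$. The degree computation for $P_n$ is the only non-formal step; the rest assembles classical facts about the Hopf fibration and $\pi_3(S^2)$.
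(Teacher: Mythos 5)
Your proposal is correct and follows essentially the same route as the paper: reduce to the fact that the Hopf invariant gives an isomorphism $\pi_3(S^2)\cong\Z$, factor $h_n = h_1\circ P_n$ with $h_1$ the Hopf fibration, and combine $H(\psi\circ\phi)=\deg(\phi)\,H(\psi)$ with $\deg(P_n)=n$. The only difference is that you verify the degree of the power map and the composition formula directly (regular-value count, de Rham description), where the paper simply cites the literature for these standard facts.
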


\begin{proof}
Since the Hopf invariant $H \colon \pi_2(S^3) \to \Z$ is an isomorphism 
it is sufficient to show that each map $h_n$ has Hopf invariant $H(h_n) = n$.
Since $h_1$ is the projection map of the Hopf fibration (see \cite[p.~106]{Thurston}),
its Hopf invariant is $1$.
It follows that the Hopf invariant of $h_n = h_1 \circ P_n$ 
is the degree of the map $P_n$ (see \cite[p.~428]{Hatcher}),
which is exactly $n$ (see \cite[p.~104]{GHV}).
\end{proof}

\begin{proof}[Proof of Fact~\ref{fa.two_types}]
Since $\cC_0$ is the class of cocycles homotopic to constant, it is indeed of type (\ref{i.densedom}).
Conversely, suppose that the class $\cC_n$ is of type (\ref{i.densedom}), that is,
some $A \in \cC_n$ has a continuous invariant $k$-plane field $\sigma \colon S^3 \to \Gr(k,3)$ for some $k \in \{1,2\}$. We will only discuss the case $k=1$, since the case $k=2$ is entirely analogous. Since $S^3$ is simply connected, the line field $\sigma$ is orientable,
that is, it lifts to a map $h \colon S^3 \to S^2$. Since $f \simeq \id$ and $A \simeq A_n$,
the map $h$ is homotopic to the map $h'(q) \coloneqq (A_n(q))(h(q)) = q^{-n} h(q) q^n$.
By Fact~\ref{fa.Hopf}, there exists $m \in \Z$ be such that $h \simeq h_m$.
Then $h' \simeq h_{m+n}$, and by Fact~\ref{fa.Hopf} again we have $n = 0$.
\end{proof}

\subsection{Different types of domination inside the same class}\label{ss.indices}

The \emph{indices} of a dominated splitting $\E = \E^1 \oplus \E^2 \oplus \cdots \oplus \E^k$
(where $\E^i$ dominates $\E^{i+1}$) are the dimensions of the subbundles
$\E^1$, $\E^1\oplus \E^2$, \dots, $\E^1 \oplus \cdots \oplus \E^{k-1}$.

Let us show that if $\cC$ is a fibered homotopy class
of type (\ref{i.densedom}) in Theorem~\ref{t.densedom},
it is not necessarily true that there is a common index of domination that appears open and densely in $\cC$.
	
Let $X = S^3 \times S^2 \times S^6$. 
Let $\E'$, $\E''$, and $\E$ be the vector bundles with base space $X$ 
whose fibers over $(x_1,x_2,x_3) \in S^3 \times S^2 \times S^6$
are respectively $\E'(x) \coloneqq T_{x_2} S^2$, $\E''(x) \coloneqq T_{x_3} S^6$,
and $\E(x) \coloneqq \E'(x) \oplus \E''(x)$. 

\begin{fact}
$\E$ has no subbundle of fiber dimension $4$.
\end{fact}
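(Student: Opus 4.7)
The plan is to rule out a rank-$4$ subbundle by an Euler class argument. Suppose for contradiction that $\mathcal{F} \subset \E$ is a subbundle of fiber dimension $4$, and equip $\E$ with a Riemannian metric so that $\E = \mathcal{F} \oplus \mathcal{F}^\perp$ splits as an orthogonal sum of two rank-$4$ bundles. Because $X = S^3 \times S^2 \times S^6$ is simply connected, $H^1(X;\Z/2) = 0$, so every real vector bundle over $X$ is orientable; in particular $\mathcal{F}$ and $\mathcal{F}^\perp$ admit integer Euler classes, and after choosing orientations compatibly with $\E$, Whitney's formula yields $e(\E) = e(\mathcal{F}) \smile e(\mathcal{F}^\perp)$, with each factor lying in $H^4(X;\Z)$.

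I would then compute $H^*(X;\Z)$ with the Künneth formula: it admits a $\Z$-basis consisting of products $t_3^{\epsilon_1} t_2^{\epsilon_2} t_6^{\epsilon_3}$, with $\epsilon_i \in \{0,1\}$ and $t_n$ of degree $n$. No sum of elements of $\{0,3\}$, $\{0,2\}$, and $\{0,6\}$ equals $4$, so $H^4(X;\Z) = 0$. Consequently $e(\mathcal{F}) = e(\mathcal{F}^\perp) = 0$, and therefore $e(\E) = 0$.

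On the other hand, $\E = \pi_2^*\, TS^2 \oplus \pi_3^*\, TS^6$, where $\pi_2$ and $\pi_3$ denote the projections of $X$ onto its second and third factors. Multiplicativity of the Euler class gives $e(\E) = \pi_2^* e(TS^2) \smile \pi_3^* e(TS^6)$. For an even-dimensional sphere, $e(TS^{2n})$ equals $\chi(S^{2n}) = 2$ times the fundamental cohomology generator, so $e(\E) = 4\, t_2 t_6$, which is four times the generator of $H^8(X;\Z) \cong \Z$ and in particular nonzero. This contradicts the conclusion of the previous paragraph, so $\E$ admits no rank-$4$ subbundle.

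The only subtle point I anticipate is ensuring that the integer Euler classes of $\mathcal{F}$ and $\mathcal{F}^\perp$ are available, which reduces to orientability of these subbundles; this is handled cleanly by simple connectivity of $X$. The rest is a routine combination of Whitney multiplicativity of the Euler class with the Künneth calculation, and in particular the vanishing of $H^4(X;\Z)$ is the crux of the obstruction.
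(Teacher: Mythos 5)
Your proof is correct and follows essentially the same route as the paper: orientability of the rank-$4$ subbundle and its complement from simple connectivity, vanishing of $H^4(X;\Z)$ by K\"unneth forcing $e(\E)=0$ via Whitney multiplicativity, against the nonvanishing computation $e(\E)=4\,t_2t_6$ coming from $e(TS^{2n})=\chi(S^{2n})=2$. The only cosmetic difference is that you express $\E$ as $\pi_2^*TS^2\oplus\pi_3^*TS^6$ and use naturality, whereas the paper phrases the same computation with the cross product of the factor bundles.
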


\begin{proof}
Let $\F$ be a subbundle of fiber dimension $4$.
Since $X$ is simply connected, $\F$ is orientable.
Then $\E = \F \oplus \F^\perp$ (oriented Whitney sum).
Let us consider cohomology groups with integer coefficients.
Since $H^m(S^n) \neq 0$ iff $m = 0$ or $n$,
and $4$ is not a sum of different numbers in $\{3,2,6\}$,
by the K\"unneth formula the cohomology group $H^4(X)$ vanishes.
In particular, the Euler classes $e(\F)$, $e(\F^\perp)$ vanish,
and therefore so does  
$e(\E) = e(\F) \smallsmile e(\F^\perp)$ (see \cite[p.~100]{MS}).
On the other hand, the vector bundle 
$\bundle{\E}{X}{}{\R^8}$ is the cartesian product (see \cite[p.~27]{MS})
of the following three vector bundles 
$$
\bundle{\G}{S^3}{}{\{0\}}, \quad
\bundle{TS^2}{S^2}{}{\R^2}, \quad
\bundle{TS^6}{S^6}{}{\R^6}, \quad 
$$
Therefore (see \cite[p.~100]{MS}),
$$
e(\E) = \underbrace{e(\G)}_{\in H^0(S^3)=\Z} \times \underbrace{e(TS^2)}_{\in H^2(S^2)=\Z} 
\times \underbrace{e(TS^6)}_{\in H^6(S^6)=\Z} \, ,
$$
where $\times$ is the cross product.
But $e(\G) = 1$ and 
$e(TS^{2k}) = \chi(S^k) = 2$, 
which
imply that $e(\E) \neq 0$.  
Contradiction.
\end{proof}

\begin{fact}
There is a minimal homeomorphism $f \colon X \to X$
and there is a homotopy class $\cC \subset \Aut(\E,f)$ 
containing an automorphism having a dominated splitting with index $2$ 
and another automorphism having a dominated splitting with index $6$.
\end{fact}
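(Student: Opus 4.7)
The plan is to realise both automorphisms as diagonal rescalings of a single automorphism that preserves the splitting $\E = \E' \oplus \E''$, and to use a continuous rescaling path to keep them in a common fibered homotopy class.

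First I would produce the base dynamics. The manifold $X = S^3 \times S^2 \times S^6$ admits a free smooth $S^1$-action (for example, multiplication by a circle subgroup $S^1 \hookrightarrow S^3$ acting on the first factor alone), and so by the Fathi--Herman theorem \cite{FH} it carries a minimal $C^\infty$ diffeomorphism $f$ which, being a small perturbation of a time map of the $S^1$-flow, is homotopic to the identity. This homotopy implies $\pi_2 \circ f \simeq \pi_2$ and $\pi_3 \circ f \simeq \pi_3$ as maps into $S^2$ and $S^6$, so $f^*\E' \cong \E'$ and $f^*\E'' \cong \E''$ as vector bundles; I can therefore pick automorphisms $A_0' \in \Aut(\E', f)$ and $A_0'' \in \Aut(\E'', f)$ and set $A_0 \coloneqq A_0' \oplus A_0'' \in \Aut(\E,f)$, which manifestly preserves the splitting.

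Fixing a fibered Riemannian metric on $\E$ compatible with the splitting, set
\[
	c_1 \coloneqq \min_{x \in X,\; v \in \E'_x,\; \|v\|=1}\|A_0' v\| > 0, \qquad
	c_2 \coloneqq \max_{x \in X,\; w \in \E''_x,\; \|w\|=1}\|A_0'' w\| < \infty.
\]
For $\lambda > c_2/c_1$, the automorphism $A_\lambda \coloneqq (\lambda A_0') \oplus A_0''$ satisfies $\|A_\lambda v\| > \|A_\lambda w\|$ on every pair of unit vectors $v \in \E'_x$, $w \in \E''_x$, so $\E = \E' \oplus \E''$ is a dominated splitting for $A_\lambda$ (with $\ell = 1$) of index $\dim \E' = 2$. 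Symmetrically, for $\mu$ large enough the automorphism $A'_\mu \coloneqq A_0' \oplus (\mu A_0'')$ carries the dominated splitting $\E = \E'' \oplus \E'$ of index $\dim \E'' = 6$. The continuous paths $t \in I \mapsto (\lambda^t A_0') \oplus A_0''$ and $t \in I \mapsto A_0' \oplus (\mu^t A_0'')$ remain within $\Aut(\E, f)$ and connect $A_0$ to $A_\lambda$ and to $A'_\mu$ respectively; hence both automorphisms lie in the fibered homotopy class $\cC$ of $A_0$, proving the claim.

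The only delicate step is the Fathi--Herman production of the minimal $f$ on $X$; once that is available, the rest is pure bundle-theoretic bookkeeping together with a one-line verification of domination at time $1$. In particular the argument invokes neither Theorem~\ref{t.densedom} nor Theorem~\ref{t.sections}.
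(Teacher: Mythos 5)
Your argument is correct, and it diverges from the paper's proof precisely at the one delicate point: how to obtain a splitting-preserving automorphism of $\E$ over the minimal diffeomorphism. The paper works concretely with the circle element $g=\phi_t$ itself, which leaves the $S^2\times S^6$ coordinates untouched, so the homothecies $B_\lambda$ along $\E'$ and $\E''$ are literally defined over $g$; it then writes the Fathi--Herman minimal map as $f_n=h_n\circ g$ with $h_n\to\id$, corrects with derivative automorphisms $D_n\in\Aut(\E,h_n)$, and invokes openness of domination to transfer the dominated splittings of $B_2$ and $B_{1/2}$ to $A_{2,n}=D_n\circ B_2$ and $A_{1/2,n}=D_n\circ B_{1/2}$ for $n$ large. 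You instead use that $f\simeq\id_X$ (strictly, Fathi--Herman makes $f$ a small perturbation of a conjugate $h\phi_t h^{-1}$ rather than of a time map itself, but such conjugates are still isotopic to the identity, so your conclusion stands), hence $f^*\E'\cong\E'$ and $f^*\E''\cong\E''$ by homotopy invariance of pullbacks, take arbitrary $A_0'\in\Aut(\E',f)$ and $A_0''\in\Aut(\E'',f)$, and then compensate for the total lack of control on these automorphisms by explicit rescaling, verifying domination directly at time $1$ from the gap $\lambda c_1>c_2$. What each route buys: the paper's construction needs no bundle-classification input and no norm estimates, relying instead on openness of domination and a limiting argument; yours dispenses with the derivative correction and the limit, at the cost of a soft appeal to homotopy invariance of pullback bundles plus elementary quantitative estimates. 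The final step is the same in both proofs: the two dominated automorphisms are joined inside $\Aut(\E,f)$ by a path of fiberwise rescalings, so they lie in a common fibered homotopy class.
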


\begin{proof}[Proof of the fact]
Consider a free action of $S^1$ on $S^3$ by diffeomorphisms.
By multiplying by the identity on $S^2 \times S^6$ we obtain a free action  
$t \in S^1 \mapsto \phi_t \in \Diff(X)$.
Choose some $\phi_t \neq \id_X$ and call it $g$.

For each $\lambda > 0$, let $B_\lambda \in \Aut(\E, g)$ be the automorphism
that preserves the subbundles $\E'$ and $\E''$, 
whose restriction to $\E'$ is an homothecy of factor $\lambda$,
and whose restriction to $\E''$ is an homothecy of factor $\lambda^{-1}$.

Since $\{\phi_t\}$ is a free action of the circle,
by a theorem of Fathi and Herman \cite{FH}, 
minimal diffeomorphisms form a residual subset of 
the $C^\infty$-closure of the union of the $C^\infty$-conjugacy classes of
the maps $\phi_t$.
In particular, there is a sequence of diffeomorphisms $h_n \to \id_X$
such that each $f_n = h_n \circ g$ is minimal. 
Taking the partial derivative of the $S^2 \times S^6$ component
of $h_n$ with respect to itself, 
we obtain a sequence of automorphisms $D_n \in \Aut(\E, h_n)$ that converges to $\id_\E$.

Let $A_{\lambda,n} \coloneqq D_n \circ B_\lambda \in \Aut(\E, f_n)$.
For each $t$, we have $A_{\lambda,n} \to B_\lambda$ as $n \to \infty$.
Since $B_2$ and $B_{1/2}$ have dominated splittings of respective indices $2$ and $6$,
if $n$ is large enough then $A_{2,n}$ and $A_{1/2,n}$ also have dominated splittings of indices $2$ and $6$.
These two automorphisms are homotopic in $\Aut(\E, f_n)$.
So, taking $f = f_n$, the fact is proved.
\end{proof}

However, no element of $\cC$ can have simultaneously domination indices $2$ and~$6$,
because it would then have an invariant ``middle'' bundle of dimension $4$,
which we have seem that is impossible.

\subsection{Failure of Theorem~\ref{t.densedom} in dimension two}\label{ss.dim2}

The statement of Theorem~\ref{t.densedom} is false for $m=2$, even for trivial bundles.
For example, if $f$ is minimal but not uniquely ergodic
and $A$ is a cocycle of rotations homotopic to constant
whose fibered rotation numbers are not the same 
for all invariant measures then $A$ cannot be approximated by dominated cocycles.
For similar reasons, the theorem also fails for uniquely ergodic homeomorphisms
such that the range of the Schwartzman asymptotic cycle is not dense.
See \cite{ABD2} for details.

For $\SL(2,\R)$-cocycles (or, slightly more generally, 
orientation-preserving linear cocycles), 
these are basically all the possible counterexamples,
as it follows from the results of \cite{ABD2}.

In the lack of orientability or when the vector bundle is nontrivial,
new topological and dynamical problems appear.
We hope to address these in a later paper. 

\medskip

Going to vector bundles $\E$ of arbitrary fiber dimension,
another interesting problem 
is to describe the domination types that appear openly inside 
a given fibered homotopy class in $\Aut(\E,f)$.

\subsection{Theorem~\ref{t.sections} fails if $g$ is not a fiberwise isometry}

Consider the product bundle $\bundle{S^3 \times S^2}{S^3}{}{S^2}$.
Its sections can be identified with continuous maps $S^3 \to S^2$.
As in \S~\ref{ss.two_types}, consider a minimal diffeomorphism $f \colon S^3 \to S^3$
homotopic to the identity.
Define $g \in \Aut(Z,f)$ by $g(x,y) \coloneqq (f(x),h(y))$, where $h \colon S^2 \to S^2$
is a diffeomorphism homotopic to the identity 
with a single attracting fixed point and a single repelling fixed point
(e.g., a ``north pole -- south pole'' map).
Then any section is $g$-invariant up to homotopy.
On the other hand, if $\epsilon>0$ is sufficiently small, then 
every $\epsilon$-almost $g$-invariant section is homotopic to a constant.
Since there exist sections that are not homotopic to constants
(for example, the Hopf map $S^3 \to S^2$),
we conclude that Theorem~\ref{t.sections} does not apply to $g$.

\subsection{More general fibers}\label{ss.unification}

For a fiberwise isometry $g$ of a bundle having simply connected 
fibers of nonpositive curvature,
Theorem~A (with Remark 2.15) from \cite{BN_geometric} characterizes the value
$$
\inf \big\{\epsilon>0 ; \; \text{$g$ has an $\epsilon$-invariant section}\}
$$
as the \emph{maximal drift} of the cocycle, defined as the linear rate of
growth of the distances between the iterates of an arbitrary section and itself.

If we want to extend such a result to bundles whose fibers are non contractible,
it seems natural to measure distances between sections by using the 
\emph{homotopy distance} \cite{CS80} instead.
If the fiber $Y$ is compact with finite fundamental group
then the Calder--Siegel theorem states that these homotopy distances are uniformly bounded,
and therefore the ``maximal homotopy drift'' would always vanish when it is finite.

These remarks indicate that Theorem~A from \cite{BN_geometric}
and Theorem~\ref{t.sections} may be manifestations of a more general phenomenon.

\appendix
\section{Construction of regular dynamical stratifications}\label{s.appendix}

In this appendix we prove Theorem~\ref{t.regularity}.
The proof has basically three steps:
First we define a transversality property concerning the iterates of the boundary $\partial K$
of an embedded $d$-dimensional disk $K$, 
and show that this property can always be obtained by perturbation.
Second, we show that this transversality property implies the regularity of 
a certain auxiliary stratification.
Third, we deduce the regularity of the dynamical stratification. 

Before going into the details, let us highlight the basic ideas.
To check that a pair $(Y,A)$ has the homotopy extension property 
we only need to understand how a neighborhood of $A$ fits inside $Y$:
see \cite[Example~0.15, p.~15]{Hatcher}.
In our situation, this local topology is controlled using transversality
between $\partial K$ and its iterates.
If the dimension $d$ equals $2$, a typical situation is shown in Fig.~\ref{f.strat}.
In dimension $3$, a typical neighborhood of a point in $X_3 \cap \interior K$
is shown in Fig.~\ref{f.d3}.
A direct construction of the necessary retractions in order to 
prove regularity of dynamical stratifications would be messy,
so we use as a convenient technical device some auxiliary stratifications 
with a simpler local topology.

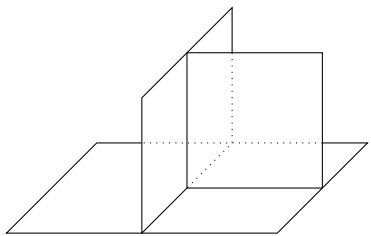
\begin{figure}[htb]
	\begin{tikzpicture}[scale=.6]
		\draw (3,2) -- (2,2) -- (0,0) -- (6,0) -- (8,2) -- (7,2); 
		\draw[dotted] (7,2) -- (3,2);                             
		\draw (4,1) -- (3,0) -- (3,3) -- (5,5) -- (5,4);          
		\draw[dotted] (5,4) -- (5,2) -- (4,1);                    
		\draw (4,1) -- (4,4) -- (7,4) -- (7,1) -- cycle;          
	\end{tikzpicture}
	\caption{The skeleton $X_1$ of a regular dynamical stratification in dimension $d=3$ around a point in $X_3 \cap \interior K$.}
	\label{f.d3}
\end{figure}

\subsection{Transverse hits}\label{ss.transv}

Let $E$ be a finite-dimensional real vector space.
Let $H_1$, \dots, $H_k$ be a finite family of hyperplanes of $E$ (i.e., codimension $1$ subspaces).
Let $\lambda_1$, \dots, $\lambda_k \in E^*$ be linear functionals such that $\Ker \lambda_i = H_i$
for each $i$; they are unique up to nonzero factors.
We say that the hyperplanes $H_1$, \dots, $H_k$ are \emph{independent} 
if the linear functionals $\lambda_1$, \dots, $\lambda_k$ are linearly independent.
An empty family of hyperplanes is also considered independent.

\begin{lemma}\label{l.transv_equivalence}
A family of hyperplanes $H_1$, \dots, $H_k$ of $E$
is independent iff 
their cartesian product $H_1 \times \cdots \times H_k$ is transverse 
to the diagonal $\Delta_k(E)$ of the cartesian power $E^k$.
\end{lemma}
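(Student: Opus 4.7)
The plan is to reduce the transversality condition to a purely algebraic condition on $E$ and then identify that condition with the linear independence of dual functionals. The whole argument is essentially linear algebra, because every set involved is a linear subspace through the origin.

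First I would observe that both $H_1 \times \cdots \times H_k$ and $\Delta_k(E)$ are linear subspaces of $E^k$, and that their intersection is nonempty since it contains the origin. At any point of intersection the relevant tangent spaces coincide with the subspaces themselves, so transversality is equivalent to the single (point-independent) algebraic identity
\[
H_1 \times \cdots \times H_k + \Delta_k(E) = E^k.
\]

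The key step is to translate this sum condition through a convenient quotient. I would consider the linear map
\[
\Phi \colon E^k \to \R^k, \qquad \Phi(w_1,\dots,w_k) \coloneqq (\lambda_1(w_1), \dots, \lambda_k(w_k)),
\]
whose kernel is exactly $H_1 \times \cdots \times H_k$, so that $\Phi$ descends to an isomorphism $E^k/(H_1\times\cdots\times H_k) \cong \R^k$. The subspace sum above fills $E^k$ if and only if $\Phi$ is surjective on $\Delta_k(E)$, that is, if and only if the map
\[
v \in E \;\longmapsto\; (\lambda_1(v),\dots,\lambda_k(v)) \in \R^k
\]
is surjective.

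The final step is standard: surjectivity of $v \mapsto (\lambda_1(v),\dots,\lambda_k(v))$ is equivalent to linear independence of $\lambda_1,\dots,\lambda_k$ in $E^*$. Indeed, if the $\lambda_i$ satisfy a nontrivial relation $\sum c_i \lambda_i = 0$, then the image lies in the hyperplane $\{\sum c_i t_i = 0\} \subset \R^k$; conversely, if they are independent, a partial dual basis produces vectors $v_i \in E$ with $\lambda_j(v_i) = \delta_{ij}$, whose images span $\R^k$. Chaining these equivalences yields the lemma. I do not expect any real obstacle here; the only point worth emphasizing for the reader is the first one, namely that for linear subspaces meeting at the origin, transversality everywhere reduces to one subspace-sum identity.
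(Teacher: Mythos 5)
Your proof is correct and in substance coincides with the paper's: both arguments come down to the elementary fact that $\lambda_1,\dots,\lambda_k$ are linearly independent if and only if $v \mapsto (\lambda_1(v),\dots,\lambda_k(v))$ is onto $\R^k$, with the dual vectors $\lambda_i(u_j)=\delta_{ij}$ in one direction and a relation $\sum_i a_i\lambda_i=0$ forcing the image into a hyperplane in the other. The only difference is packaging: the paper checks transversality directly (explicit decomposition $v_j-w\in H_j$ for spanning, and a functional vanishing on both subspaces for the converse), whereas you factor through the quotient $E^k/(H_1\times\cdots\times H_k)\cong\R^k$ and reduce everything to surjectivity of the restriction to the diagonal, which is a slightly tidier but equivalent route.
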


\begin{proof} 
For each hyperplane $H_i$, fix a functional $\lambda_i \in E^*$ whose kernel is $H_i$.

Assume that the functionals $\lambda_1$, \dots, $\lambda_k$ are linearly independent.
Take vectors $u_1$, \dots, $u_k \in E$ such that $\lambda_i(u_j) = \delta_{ij}$.
Given $(v_1, \dots, v_k) \in E^k$, let $w = \sum_{i=1}^k \lambda_i(v_i) u_i$.
Then $v_j - w \in H_j$ for each $j$, which shows that $(v_1, \dots, v_k)$
is spanned by $H_1 \times \cdots \times H_k$ and $\Delta_k(E)$.
This proves that these two spaces are transverse.

Conversely, assume that $H_1 \times \cdots \times H_k \transverse \Delta_k(E)$.
Suppose $a_1$, \dots, $a_k \in \R$ are such that $\sum_i a_i \lambda_i = 0$.
Then the linear map $\Lambda \colon E^k \to \R$ defined by 
$\Lambda(v_1, \dots, v_k) \coloneqq \sum_{i=1}^k a_i \lambda_i(v_i)$.
vanishes both on the product $H_1 \times \cdots \times H_k$ and on the diagonal $\Delta_k E$,
and so it must be zero. Hence $a_i = 0$ for every $i$, showing that
$\lambda_1$, \dots, $\lambda_k$ are linearly independent.
\end{proof}

Let $K \subset X$ be a embedded $d$-dimensional (closed) disk.
Let $N$ be a finite set of integers, and let $x \in X$ be a point.
We say that \emph{the $N$-hits of $x$ at $\partial K$ are transverse} 
if the hyperplanes $Df^{-j} \left(T_{f^j(x)} (\partial K) \right) \subset T_x X$,
where $j$ runs on the elements of $N$ such that $f^j(x) \in \partial K$,
form an independent family.
If this condition is satisfied for every $x\in X$
then we say that $K$ has the \emph{transverse $N$-hits property}.

\begin{lemma}\label{l.transverse_hits}
Let $K \subset X$ be an embedded $d$-dimensional disk,
and let $U$ be a neighborhood of $\partial K$.
Then, for any finite set $N \subset \Z$,
there exists an embedded $d$-dimensional disk $\tilde K$
with the transverse $N$-hits property
and such that $\tilde K \mathbin{\vartriangle} K \subset U$.
\end{lemma}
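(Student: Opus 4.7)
The plan is to recast the transverse $N$-hits condition as a multi-transversality statement and achieve it by perturbing the embedding that defines $\partial K$. Concretely, let $\iota \colon \partial K \hookrightarrow X$ denote the inclusion, write $M \coloneqq \partial K$, and for each nonempty $J \subseteq N$ set $F_J \colon M^J \to X^J$ to be the multi-map $F_J((p_j)_{j \in J}) \coloneqq (f^{-j}(\iota(p_j)))_{j \in J}$. A direct calculation, combined with Lemma~\ref{l.transv_equivalence}, identifies the transverse $N$-hits property with the simultaneous transversality $F_J \transverse \Delta_J$ for every nonempty $J \subseteq N$, where $\Delta_J \subset X^J$ denotes the small diagonal; in this way, the subset $J$ of size $k$ governs the ``$k$-fold hits'' of $\partial K$ by the orbit.

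The strategy is then a standard parametric transversality argument applied to the space $\cE$ of smooth embeddings $\tilde \iota \colon M \to X$ that coincide with $\iota$ outside a fixed compact subset of $U \cap \partial K$; for any such $\tilde \iota$ sufficiently close to $\iota$, the resulting disk $\tilde K$ automatically satisfies $\tilde K \mathbin{\vartriangle} K \subset U$. For each nonempty $J \subseteq N$, consider the universal evaluation map $\Phi_J \colon \cE \times M^J \to X^J$ sending $(\tilde \iota, (p_j))$ to $(f^{-j}(\tilde \iota(p_j)))_{j \in J}$. If $\Phi_J$ itself is transverse to $\Delta_J$, then Thom's parametric transversality theorem gives a residual set of $\tilde \iota \in \cE$ for which the restricted map $\tilde F_J \coloneqq \Phi_J(\tilde \iota, \mathord{\cdot})$ is transverse to $\Delta_J$. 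Intersecting these residual sets over the finitely many $J \subseteq N$ and choosing any $\tilde \iota$ in the common intersection produces a disk $\tilde K$ enjoying the transverse $N$-hits property.

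The main step I expect to be the real obstacle is verifying the transversality of $\Phi_J$ to $\Delta_J$, which reduces to the following local claim: given $(p_j) \in M^J$ whose images $f^{-j}(\iota(p_j))$ all coincide at some $x \in X$, every perturbation of the tuple $(\iota(p_j))_{j \in J} \in X^J$ can be realized by varying $\tilde \iota$ inside $\cE$. This in turn depends on the points $\iota(p_j) = f^j(x)$ being pairwise distinct, so that bump-function deformations of $\iota$ supported near one $p_j$ do not disturb the others. Here I crucially use that a minimal diffeomorphism of a compact manifold of positive dimension has no periodic points: any periodic orbit would be a finite, proper, closed invariant set, contradicting minimality since $X$ is uncountable. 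Given distinctness, pairwise disjoint charts around the points $f^j(x)$ yield bump-function deformations of $\iota$ realizing arbitrary independent tangent displacements at each $\iota(p_j)$, which makes $\Phi_J$ a submersion at all points of $\Phi_J^{-1}(\Delta_J)$ and thus completes the transversality check.
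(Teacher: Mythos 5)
Your proposal is correct and is essentially the paper's argument: the transverse $N$-hits property is recast, via Lemma~\ref{l.transv_equivalence}, as transversality of multi-evaluation maps to (translates of) the diagonal, and it is achieved generically by a parametric transversality argument whose key verification---that bump perturbations near the hit points move the coordinates independently---rests on the points $f^{j}(x)$ being pairwise distinct, which holds because a minimal diffeomorphism of a compact manifold of positive dimension has no periodic points. The paper implements this with an explicit finite-dimensional family of bump perturbations of the embedding of the whole disk (so the perturbed disk $\tilde K$ is immediate and the standard finite-parameter transversality theorem applies directly), whereas you perturb only the boundary embedding over the infinite-dimensional space $\cE$, so you would still need to specify how $\tilde K$ is obtained from the perturbed sphere (e.g.\ by extending the perturbation to the disk or by isotopy extension, keeping the change inside $U$) and to justify genericity in that setting---both standard points, and in substance the same construction.
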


\begin{proof}
Let $K \subset X$ be the image of an embedding
$h$ of the closed unit disk $\bar{B}(0,1) \subset \R^d$.
We can extend $h$ to a diffeomorphism between 
the open disk $B(0,2)$ and a neighborhood of $K$.
Let $U$ be any given neighborhood of $\partial K$.
Reducing $U$ if necessary, we can assume it is 
the image under $h$ of a spherical shell $B(0,1+\delta) \setminus \bar{B}(0,1-\delta)$,
for some $\delta \in (0,1)$.

Fix a finite set $N \subset \Z$, and let $n$ be its diameter.
Since $f$ has no periodic points, 
we can cover the unit sphere $S^{d-1}$ by open disks $B_1$, \dots, $B_m$ on $\R^d$
of radii less than $1/2$ and such that 
\begin{equation}\label{e.disjoint_images}
h(B_\ell) \cap f^j(h(B_\ell)) = \emptyset \quad \text{for all $j\in\{1,\dots,n\}$ and $\ell\in \{1,\dots,m\}$.}
\end{equation}
Let $\{\rho_1, \dots, \rho_m\}$ be a $C^\infty$ partition of unity subordinate to this cover.

For each $k \in \{1,2,\dots,n\}$, define a map 
$$
\Psi_k \colon (B(0,\epsilon))^m \times (S^{d-1})^{k} \to X^{k} 
$$
(where $\epsilon \in (0,\delta)$ will be determined later) by
$$
\Psi_k (y_1, \dots, y_m, z_1, \dots, z_k) \coloneqq 
\left( h \left( z_i + \sum_{\ell=1}^m \rho_\ell(z_i) y_\ell \right) \right)_{i=1,\dots,k}
$$
For each subset $J \subset N$ of cardinality $k$, 
say $J = \{j_1 < \cdots < j_k\}$, let
$f_J \colon X^{k} \to X^{k}$ be the diffeomorphism
$$
f_J(x_1,\dots,x_k) = \big(f^{j_1}(x_1), f^{j_2}(x_2), \dots, f^{j_k}(x_k) \big)  \, .
$$
Let $\Delta_{k} X$ be the diagonal of $X^{k}$,
and let $G_J \coloneqq f_J(\Delta_{k} X)$;
both are closed submanifolds of $X^{k}$.

\begin{fact}\label{fa.transv}
If $\epsilon \in (0,\delta)$ is chosen sufficiently small then 
for every nonempty $J \subset N$, if $k \coloneqq \# J$ 
then the map $\Psi_k$ is transverse to the submanifold $G_J$.
\end{fact}

\begin{proof}
Let $J = \{j_1 < \cdots < j_k\} \subset N$ and 
let $(z_1, \dots, z_k) \in (S^{d-1})^{k}$ be such that 
$\Psi_k (0, \dots, 0, z_1, \dots, z_k) \in G_J$,
that is, there exists $x_1 \in \partial K$ such that
\begin{equation}\label{e.hit}
h(z_i) = f^{j_i}(x_1) \quad \text{for every $i \in \{1,\dots, k\}$.}
\end{equation}
We will actually prove that the derivative of $D\Psi_k$
at the point $(0, \dots, 0, z_1, \dots, z_k)$ is onto, which implies the fact.

Indeed, for each $i \in \{1,\dots,k\}$, 
we can choose $\ell_i$ such that $\rho_{\ell_i}(z_i) \neq 0$,
and in particular $z_i \in B_{\ell_i}$.
By \eqref{e.hit}, the points $h(z_i)$ belong to a common segment of orbit of $f$ of length at most $n+1$,
and so using \eqref{e.disjoint_images} we conclude that the indices $\ell_i$ are pairwise distinct.
In particular, $\rho_{\ell_i}(z_t) = 0$ whenever $t \neq i$.
Suppose that 
$(y_1, \dots, y_m)$ is such that $y_\ell = 0$ if $\ell \not\in \{\ell_1, \dots, \ell_k\}$;
then 
$$
\Psi_k (y_1, \dots, y_m, z_1, \dots, z_k) \coloneqq 
\left( h \left( z_i + \rho_{\ell_i}(z_i) y_{\ell_i} \right) \right)_{i=1,\dots,k}
$$
Taking the derivative with respect to $(y_{\ell_1}, \dots, y_{\ell_k})$, we 
obtain an onto linear map;
in particular the derivative $D\Psi_k(0, \dots, 0, z_1, \dots, z_k)$ 
is also onto, as we wanted to show.
\end{proof}

Fix some $\epsilon \in (0,\delta)$ with the property given by 
Fact~\ref{fa.transv}.
It follows from the parametric transversality theorem \cite[p.~79]{Hirsch} 
that for each nonempty $J \subset N$ there exists a residual subset of $R_J \subset (B(0,\epsilon))^m$
such that if $(y_1, \dots, y_m) \in R_J$ and $k = \# J$ then 
\begin{equation}\label{e.I_like_cake}
\Psi_k (y_1, \dots, y_m, \ \cdot \ ) \colon (S^{d-1})^{k} \to X^{k} \text{ is transverse to } G_J \, .
\end{equation}
Choose and fix a point $(y_{1}^*, \dots, y_{m}^*)$ in the residual set $\bigcap_{\emptyset \neq J\subset N} R_J \subset (B(0,\epsilon))^m$
close enough to $(0,\dots,0)$ so that the map $\tilde{h} \colon \bar{B}(0,1) \to X$
defined by
$$
\tilde{h} (z) \coloneqq  h \left( z + \sum_{\ell=1}^m \rho_\ell(z) y_{\ell}^* \right) 
$$
is a diffeomorphism.
We define an embedded disk $\tilde{K} \coloneqq \tilde{h}(\bar{B}(0,1))$.
Notice that the boundary $\partial \tilde{K}$ is contained in $V$,
and thus so is the symmetric difference $\tilde K \mathbin{\vartriangle} K$.

Let us check that $\partial \tilde{K}$ has the transverse $N$-hits property.
Fix any $x \in X$ and consider
$$
J = \big\{ j_1 < j_2 < \cdots < j_k \big\} \coloneqq \big\{ j \in N ; \; f^j(x) \in \partial \tilde{K} \big\} \, .
$$
We need to show that the hyperplanes
$$
H_i \coloneqq Df^{-j_i} \left(T_{f^{j_i}(x)} (\partial \tilde{K}) \right) , \quad (i=1,2,\dots,k)
$$
are independent.
Assume that $k > 1$, otherwise there is nothing to prove.
Let $\psi$ be the restriction of $\tilde{h}$ to the unit sphere,
so $\partial \tilde{K} = \psi (S^{d-1})$.
For each $i \in \{1,\dots,k\}$, let $z_{i}^* \coloneqq \psi^{-1} (f^{j_i}(x)) \in S^{d-1}$. 
Condition \eqref{e.I_like_cake} 
specialized to the point $(y_{1}^*,\dots,y_{m}^*)$ means that the map
$$
\psi_k \colon (S^{d-1})^{k} \to X^{k}
\quad\text{defined by }
\psi(z_1, \dots, z_k) \coloneqq (\psi(z_1), \dots, \psi(z_k)) 
$$	
is transverse to $G_J = f_J(\Delta_{k} X)$.
Equivalently, $f_J^{-1} \circ \psi_k \transverse \Delta_{k} X$.
In particular, since $f_J^{-1} \circ \psi_k (z_{1}^*,\dots, z_{k}^*) = (x,\dots,x)$,
the spaces
$$
\Im D (f_J^{-1} \circ \psi_k) (z_{1}^*, \dots, z_{k}^*) = H_1 \times \cdots \times H_k
\quad \text{and} \quad
T_{(x,\dots,x)}(\Delta_{k} X) = \Delta_{k} (T_{x} X)
$$
are transverse in $(T_{x} X)^{k}$.
By Lemma~\ref{l.transv_equivalence}, this means that
the hyperplanes $H_1$, \dots, $H_k$ are independent.
The proof of Lemma~\ref{l.transverse_hits} is concluded.
\end{proof}

\subsection{Fine stratifications}

Before proving Theorem~\ref{t.regularity},
we need to establish analogous regularity properties for 
certain auxiliary finer stratifications
that have a simpler local structure.

Suppose $K\subset X$ is a closed set with nonempty interior.
Define $m(K)$ as the least positive integer $m$ such that
$\bigcup_{j=0}^{m-1} f^{-j}(\interior K) = X$.
Notice that the numbers defined in \eqref{e.ell_plus} and \eqref{e.ell_minus}
satisfy the bounds:
$$
\ell^+(x) \le m(K)-1  \quad \text{and} \quad 
\ell^-(x) \le m(K)    \quad \text{for every } x \in X.
$$
Let  
$$
N(K) \coloneqq \{ j \in \Z \; ; -m(K) \le j \le m(K)-1 \} \, , 
$$
and for each $x \in X$, let
$$
\cM(x) \coloneqq \{j\in N(K) ; \; f^j(x) \in \partial K \} \, .
$$
Therefore the set $\cL(x)$ defined in \eqref{e.L}
equals $\cM(x) \cap (-\ell^-(x) , \ell^+(x))$. 

\medskip
Recall that the dynamical stratification associated to $K$ is the sequence
$$
X = X_0 \supset X_1 \supset \cdots \, , \quad \text{where }
X_i \coloneqq \{x \in X ;\; \#\cL(x) \ge i\} \, .
$$
The \emph{strata} of the stratification $(X_i)$ are defined as the connected components
of the nonempty sets $X_i \setminus X_{i+1}$, and form a partition of $X$.

We now define the \emph{fine dynamical stratification associated to $K$} as the sequence
$$
X = W_0 \supset W_1 \supset \cdots \, , \quad \text{where }
W_i \coloneqq \{x \in X ;\; \#\cM(x) \ge i\} \, .
$$
The corresponding strata are the connected components
of the nonempty sets $W_i \setminus W_{i+1}$.
Note that $W_i \supset X_i$ for every $i$. 
In other words, the strata of $(W_i)$ form a finer partition
than those of $(X_i)$. 
Also note that $W_{2m(K)+1} = \emptyset$.

The following lemma relates locally the two stratifications:

\begin{lemma}\label{l.frontier}
Letting $K_i \coloneqq K \cap X_i$ for each $i \ge 0$,
we have $\overline{W_i \setminus K_j} \cap K_j \subset W_{i+1}$ 
for any $i$, $j \ge 0$.
\end{lemma}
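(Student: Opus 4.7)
The plan is to prove the contrapositive by contradiction: assume there is a sequence $x_n \in W_i \setminus K_j$ with $x_n \to x \in K_j$ but $\#\cM(x) \le i$. The central idea is that if $\cM$ is constant along the sequence, then so are $\ell^+$, $\ell^-$, and $\cL$, and so is the property of lying in $K$; that will conflict with $x \in K_j$ and $x_n \notin K_j$.

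First I will record the upper semicontinuity of $\cM$, which is proved exactly like the one for $\cL$ in Lemma~\ref{l.loc_const}: if $y_n \to y$ and $j \in \cM(y_n)$ infinitely often, then $f^j(y_n) \to f^j(y) \in \partial K$, so $j \in \cM(y)$. Since $N(K)$ is finite, by pigeonholing I can pass to a subsequence so that $\cM(x_n) = S$ is a single set of size $\ge i$, and then $S \subset \cM(x)$. The contradiction hypothesis $\#\cM(x) \le i$ then forces $\cM(x) = S$ exactly.

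The technical step is to show that $\cM(x_n) = \cM(x)$ forces $\ell^+(x_n) = \ell^+(x)$ eventually (and symmetrically for $\ell^-$). Upper semicontinuity of $\ell^+$ already gives $\ell^+(x_n) \le \ell^+(x)$. Supposing strict inequality, set $k \coloneqq \ell^+(x_n)$; then $k < \ell^+(x) \le m(K)-1$, so $k \in N(K)$. By definition $f^k(x_n) \in \interior K$ (hence $k \notin \cM(x_n)$), while $f^k(x) = \lim f^k(x_n) \in \overline{\interior K} = K$ with $f^k(x) \notin \interior K$ (as $k < \ell^+(x)$); therefore $f^k(x) \in \partial K$ and $k \in \cM(x) \setminus \cM(x_n)$, contradicting $\cM(x) = \cM(x_n)$. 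This I expect to be the main obstacle, though really it is just a careful bookkeeping argument.

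The conclusion is then routine. With $\cM(x_n) = \cM(x) = S$ and the same window $(-\ell^-(x),\ell^+(x))$, I get $\cL(x_n) = \cL(x)$, so $\#\cL(x_n) = \#\cL(x) \ge j$. Moreover, membership in $K$ is detected by the data $(S, \ell^+)$: either $x \in \interior K$ (equivalently $\ell^+(x) = 0$, which also gives $\ell^+(x_n) = 0$, so $x_n \in \interior K$) or $x \in \partial K$ (equivalently $0 \in S = \cM(x_n)$, so $x_n \in \partial K$). In either case $x_n \in K$, and combined with $\#\cL(x_n) \ge j$ this gives $x_n \in K_j$, contradicting the assumption $x_n \in W_i \setminus K_j$. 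Hence $\#\cM(x) \ge i+1$, i.e.\ $x \in W_{i+1}$, as required.
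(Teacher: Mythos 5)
Your proof is correct and uses essentially the same ingredients as the paper's: pass to a subsequence on which the combinatorial data is constant, use upper semicontinuity of $\cM$ and $\ell^{\pm}$, and convert a strict drop of $\ell^{\pm}$ into an extra boundary hit $f^{k}(x)\in\partial K$; the paper argues that some semicontinuity relation must be strict and splits into cases, while you assume $\#\cM(x)\le i$ and derive $x_n\in K_j$, which is the same argument run in reverse. A small bonus of your arrangement is the explicit check that $x_n\in K$ (via $\ell^{+}(x)=0$ or $0\in\cM(x)$), which justifies the comparison with $K_j$ at exactly the point where the paper's inequality $\#\cL(x)\ge j>\#\cL(x_n)$ is stated rather tersely.
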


The proof of this lemma is somewhat similar to that of Lemma~\ref{l.loc_const}:

\begin{proof}
If $i<j$ then $K_j \subset W_{i+1}$ and the assertion becomes trivial.
So assume that $i \ge j$.
Consider a point $x \in \overline{W_i \setminus K_j} \cap K_j$;
let us show that $x \in W_{i+1}$.
Choose a sequence $(x_n)$ in $W_i \setminus K_j$ converging to $x$.
By passing to a subsequence, we can assume that $\cM(x_n)$, $\ell^+(x_n)$ and $\ell^-(x_n)$ 
are all independent of $n$.
By continuity of $f$ we have
\begin{equation}\label{e.3_cases}
\cM(x_n) \subset \cM(x)   \, , \quad
\ell^+(x_n) \le \ell^+(x) \quad \text{and} \quad
\ell^-(x_n) \le \ell^-(x) \, .
\end{equation}
Notice that the sets 
$$
\cL(x) = \cM(x) \cap (-\ell^-(x), \ell^+(x)) \quad \text{and} \quad
\cL(x_n) = \cM(x_n) \cap (-\ell^-(x_n), \ell^+(x_n))
$$
are different, because $\# \cL(x) \ge j > \# \cL(x_n)$.
Therefore at least one of the relations in \eqref{e.3_cases} is strict.
We consider the three possible cases:
\begin{itemize}
\item If $\cM(x_n) \subsetneqq \cM(x)$ then 
$\# \cM(x) > \# \cM(x_n) \ge i$, so $x \in W_{i+1}$.
\item If $\ell^+(x_n) < \ell^+(x)$ then $z: = f^{\ell^+(x_n)}(x) \not \in \interior K$;
on the other hand,
$$
z = \lim_{n \to \infty} f^{\ell^+(x_n)}(x_n) \in \overline{\interior(K)} \subset K \, ,
$$
so $z \in \partial K$.
This shows that $\ell^+(x_n) \in \cM(x) \setminus \cM(x_n)$,
and so by the previous case, $x \in W_{i+1}$.
\item If $\ell^+(x_n) < \ell^+(x)$ then $x \in W_{i+1}$ analogously.
\end{itemize}
This proves Lemma~\ref{l.frontier}.
\end{proof}

The following important lemma yields regularity properties for the stratification $(W_i)$:

\begin{lemma}\label{l.reg_strat_fine}
If $K$ is an embedded $d$-dimensional disk with the transverse $N(K)$-hits property
then the fine dynamical stratification $(W_i)$ associated to $K$ satisfies the following properties:
\begin{itemize}
\item $W_{d+1} = \emptyset$; 
\item for each $i\in \{0,1,\dots,d\}$, the pair $(W_i, W_{i+1})$ has the homotopy extension property.
\end{itemize}
\end{lemma}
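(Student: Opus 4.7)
The proof plan has three stages. First, the bound $W_{d+1}=\emptyset$: if $x\in W_{d+1}$, then $\#\cM(x)\ge d+1$ would produce $d+1$ independent hyperplanes in the $d$-dimensional space $T_xX$, i.e.\ $d+1$ linearly independent functionals in the $d$-dimensional dual $(T_xX)^*$, which is absurd.

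Second, to establish a standard local model for $(W_i,W_{i+1})$. Fix $x\in X$ with $\cM(x)=\{j_1,\dots,j_k\}$ (so $k\le d$). Independence of the hyperplanes $Df^{-j_\alpha}\bigl(T_{f^{j_\alpha}(x)}\partial K\bigr)$ combined with the implicit function theorem furnishes smooth coordinates $(y_1,\dots,y_d)$ on a neighborhood $U$ of $x$ in which $f^{-j_\alpha}(\partial K)\cap U=\{y_\alpha=0\}$ for every $\alpha=1,\dots,k$. Upper semicontinuity of $\cM$ --- essentially the observation used in the proof of Lemma~\ref{l.frontier} --- allows me to shrink $U$ so that no other iterate of $\partial K$ enters $U$; consequently
\[
W_i\cap U \;=\; \bigl\{ y\in U : \bigl|\{\alpha\le k : y_\alpha=0\}\bigr|\ge i\bigr\}
\]
is a finite union of transversely meeting coordinate subspaces. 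This set carries the obvious cubical CW decomposition whose open cells are products of $(-\varepsilon,0)$, $\{0\}$, $(0,\varepsilon)$ in the first $k$ coordinates and an open cube in the remaining $d-k$; relative to it, $W_{i+1}\cap U$ is a subcomplex of $W_i\cap U$, so the pair has the HEP locally.

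Third --- and this is the main obstacle --- I must globalize the local HEP. My preferred route is to observe that each $W_i$ is locally an ANR, being a finite union of closed smooth submanifolds meeting transversely; by the Hanner theorem, each $W_i$ is then a (paracompact) ANR globally, and since $W_{i+1}$ is a closed ANR subset of the ANR $W_i$, Borsuk's theorem on closed ANR inclusions yields the desired cofibration property, that is, the HEP. A more self-contained alternative is to cover $X$ by finitely many charts of the form above, paste together the local cubical deformation retractions of $W_i\cap U$ onto $W_{i+1}\cap U$ by means of a subordinate partition of unity, and verify Str\o m's NDR criterion for $(W_i,W_{i+1})$; here the transverse $N(K)$-hits property ensures that in overlapping charts the distinguished coordinate hyperplanes correspond to the same iterates of $\partial K$, so the local decompositions of $W_\bullet$ agree on overlaps and the pasting is unobstructed.
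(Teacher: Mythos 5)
Your argument is correct in substance, but it reaches the homotopy extension property by a genuinely different route than the paper. The first bullet and the local analysis coincide: the paper also uses the independence of the differentials to build charts $\psi_x=(\lambda_{j_1},\dots,\lambda_{j_i},\chi_1,\dots,\chi_{d-i})$ in which the relevant iterates of $\partial K$ are coordinate hyperplanes. The divergence is in the globalization. You invoke soft ANR theory: locally $W_i$ is a finite union of transverse coordinate subspaces, hence an ANR by the union theorem; Hanner upgrades this to a global ANR; and the inclusion of the closed ANR $W_{i+1}$ into the ANR $W_i$ is then a cofibration. This is legitimate (all three ingredients are standard citable theorems) and arguably shorter. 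The paper instead is entirely constructive: it pastes the local model vector fields $\mathbf{u}_{i}$ with a partition of unity into a global, discontinuous but stratum-wise smooth vector field $\mathbf{v}$, integrates it with $W_{i+1}$ as an absorbing barrier, and writes the retraction of $W_i\times I$ onto $W_i\times\{0\}\cup W_{i+1}\times I$ explicitly. What the explicit construction buys is Scholium~\ref{scholium}: along the resulting retraction the set $\cL$ can only grow, and this is precisely what Lemma~\ref{l.reg_strat_2} needs to pass from the fine stratification $(W_i)$ to the coarse one $(X_i)$; a pure existence argument such as yours proves the present lemma but does not by itself supply that extra property, so the downstream proof of Theorem~\ref{t.regularity} would have to be reworked. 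Finally, be careful with your ``self-contained alternative'': $W_i\cap U$ need not deformation retract onto $W_{i+1}\cap U$ (the latter may be empty), and one cannot paste homotopies with values in $W_i$ by a partition of unity, since $W_i$ has no linear structure; the object that can be pasted is the vector field, which is exactly the paper's device.
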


Let us summarize what is involved in the proof of this lemma.
It follows from transversality that $W_{d+1} = \emptyset$ 
and moreover for each $i\in \{0,1,\dots,d\}$, 
every stratum $S \subset W_i \setminus W_{i+1}$
is a submanifold of codimension $i$ such that $\overline{S} \setminus S \subset W_{i+1}$.
We construct a vector field $\mathbf{v}$ on $X$
that is tangent to each strata and ``points to'' strata of higher codimension.
Despite being discontinuous, this vector field can be integrated
to a flow whose restriction to each strata is continuous
and moreover has the property that if a point in $W_i \setminus W_{i+1}$ is close to $W_{i+1}$
then its flow hits $W_{i+1}$ in small positive time.
Using this flow we construct the desired retractions in order to conclude that 
each pair $(W_i, W_{i+1})$ has the homotopy extension property.

\begin{proof}
Let $K$ be an embedded $d$-dimensional disk with the transverse $N$-hits property, where $N=N(K)$.
Let $(W_i)$ be the associated fine dynamical stratification.
Since no $d+1$ hyperplanes in a $d$-dimensional space can be independent,
we have $W_{d+1} = \emptyset$.

Fix a smooth map $\lambda_0 \colon X \to \R$ having $0$ as a regular value 
and such that the submanifold $\lambda_0^{-1}(0)$ is precisely $\partial K$.
Let $\lambda_j \coloneqq \lambda_0 \circ f^j$ for each $j \in N$.

Given $x \in X$, let 
$$
\sigma_1(x) \le \sigma_2(x) \le \cdots \le \sigma_{2m}(x)
$$
be the ordered list of the values $|\lambda_j(x)|$, where $j$ runs on the set $N$, with possible repetitions.
Notice that for each $i$, the function $\sigma_i$ is continuous and 
its zero set is $W_i$.

Fixed $x \in X$, let $i = i(x)$ be the least nonnegative integer such that $x \in W_i \setminus W_{i+1}$.
Let 
$$
\big \{j_1 = j_1(x) < j_2 = j_2(x) < \cdots < j_i = j_{i(x)}(x) \big\}
$$
be the set of times $j \in N$ such that $\lambda_j(x) = 0$, that is, $f^j(x) \in \partial K$.
By the transverse $N$-hits property, 
the functionals $D\lambda_{j_1}(x)$, \dots, $D\lambda_{j_i}(x)$ are linearly independent.
Therefore we can choose
smooth functions $\chi_1$, \dots, $\chi_{d-i}$ on a neighborhood $U(x)$ of $x$
such that 
$$
\psi_x \coloneqq \big( \lambda_{j_1} , \dots , \lambda_{j_i} , \chi_1, \dots, \chi_{d-i})
$$ 
is a diffeomorphism from $U(x)$ onto a subset of $\R^d$. 
(If $i=0$ then $\psi_x$ is an arbitrary chart around $x$.)
Let 
$$
\delta(x) \coloneqq \frac{\sigma_{i+1}(x)}{2} \, .
$$
Reducing the neighborhood $U(x)$ if necessary, 
we ensure that for every point $y \in U(x)$ we have
\begin{equation}\label{e.delta}
\max \big( |\lambda_{j_1}(y)|,  |\lambda_{j_2}(y)| , \dots ,  |\lambda_{j_i}(y)| \big) < \delta(x) < \sigma_{i+1}(y) \, ,
\end{equation}
where $\max \emptyset \coloneqq 0$.
In particular, $U(x) \subset W_i \setminus W_{i+1}$.

Next, take a finite subcover of the manifold $X$ by these neighborhoods:
$$
X = \bigcup_{\alpha=1}^\nu U_\alpha, \qquad U_\alpha \coloneqq U(x_\alpha).
$$

For each $i \in \{1,\dots, d\}$,
define the following (discontinuous) vector field on the euclidian space $\R^d$:
$$
\mathbf{u}_i(z_1, \dots, z_d) \coloneqq \big( -\sgn(z_1) ,\dots, -\sgn(z_i), 0, \dots, 0 \big),
$$
where $\sgn(t)$ is defined as $1$, $0$ or $-1$, depending on whether $t$ is positive, zero or negative, respectively.
Also let $\mathbf{u}_0 \coloneqq 0$.

For each $\alpha \in \{1,\dots, \nu\}$, let $i_\alpha \coloneqq i(x_\alpha)$
and let $\mathbf{v}_\alpha$ be the pull-back of the vector field $\mathbf{u}_{i_\alpha}$ 
under the diffeomorphism $\psi_\alpha \coloneqq \psi_{x_\alpha}$,
that is,
$$
\mathbf{v}_\alpha(y) \coloneqq [D\psi_\alpha(y)]^{-1} \big( \mathbf{u}_{i_\alpha}( \psi_\alpha(y) ) \big) \quad \text{for all } y \in U_\alpha \, .
$$
See Fig.~\ref{f.fields}.
Notice that for each stratum $S$ intersecting $U_\alpha$,
the vector field $\mathbf{v}_\alpha$ restricted to the submanifold $S\cap U_\alpha$
is smooth and tangent to it.

\newcommand{\curva}{.06}
\newcommand{\curvb}{-.2}
\newcommand{\flecha}{.25}
\begin{figure}[htp]
	\begin{tikzpicture}[scale=1.5,font=\scriptsize]
		\draw[blue,thick,variable=\u,domain=-1.25:3.5] plot({\u},{\curva*\u*\u});
		\draw[blue,thick,variable=\v,domain=-1:1] plot({\curvb*\v*\v},{\v});
		
		\foreach \u in {-.75,.75}{
			\draw[gray,variable=\v,domain=-.75:.75] plot({\u+\curvb*\v*\v},{\v+\curva*\u*\u});
		}
		\foreach \v in {-.75,.75}{
			\draw[gray,variable=\u,domain=-.75:.75] plot({\u+\curvb*\v*\v},{\v+\curva*\u*\u});
		}
		\foreach \u in {1.5,3}{
			\draw[gray,variable=\v,domain=-.75:.75] plot({\u+\curvb*\v*\v},{\v+\curva*\u*\u});
		}	
		\foreach \v in {-.75,.75}{
			\draw[gray,variable=\u,domain=1.5:3] plot({\u+\curvb*\v*\v},{\v+\curva*\u*\u});
		}

		\foreach \u in {-.5,0,.5}{
			\foreach \v in {-.5,0,.5}{
				\draw[fill] ({\u+\curvb*\v*\v},{\v+\curva*\u*\u}) circle(.03);
				\ifthenelse{\equal{\u}{0} \and \equal{\v}{0}}{}{
					\draw[very thick, ->] ({\u+\curvb*\v*\v},{\v+\curva*\u*\u})--++(-2*\u*\flecha-4*\curvb*\v*\v*\flecha,-2*\v*\flecha-4*\curva*\u*\u*\flecha);}
			}
		}

		\foreach \u in {1.75,2.25,2.75}{
		 	\foreach \v in {-.5,0,.5}{
		 		\draw[fill] ({\u+\curvb*\v*\v},{\v+\curva*\u*\u}) circle(.03);
				\ifthenelse{\equal{\v}{0}}{}{
					\draw[very thick, ->] ({\u+\curvb*\v*\v},{\v+\curva*\u*\u})--++(-4*\curvb*\v*\v*\flecha,-2*\v*\flecha);
				}
			}
		}
		
		\node[below right] at (0,0) {$x_\alpha$};
		\node[below right] at (2.25,\curva*2.25*2.25) {$x_\beta$};		
		\node[left] at (-.75+\curvb*.75*.75, .75+\curva*.75*.75) {$U_\alpha$};
		\node[right] at (3+\curvb*.75*.75, .75+\curva*3*3) {$U_\beta$};

	\end{tikzpicture}
	\caption{The vector fields $\mathbf{v}_\alpha$ and $\mathbf{v}_\beta$ around points $x_\alpha \in W_2$ and $x_\beta \in W_1 \setminus W_2$.}\label{f.fields}
\end{figure}
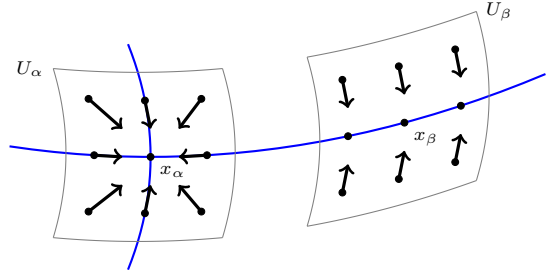

Let $(\rho_\alpha)$ be a smooth partition of unity subordinate the open cover $(U_\alpha)$.
Define a (discontinuous) vector field $\mathbf{v}$ on $X$ by the formula:
$$
\mathbf{v} (y) \coloneqq \sum_{\alpha ; \;  U_\alpha \ni y} \rho_\alpha(y) \mathbf{v}_\alpha(y) \, .
$$
Then the restriction of $\mathbf{v}$ to each stratum is smooth 
(and in particular, locally Lipschitz) 
and tangent to it.

Let $\delta_\alpha \coloneqq \delta(x_\alpha)$ and $\delta_* \coloneqq \min_\alpha \delta_\alpha$.
We claim that the vector field $\mathbf{v}$ has following property:
\begin{equation}\label{e.it_s_a_trap}
D\lambda_j(y)(\mathbf{v} (y)) = -\sgn(\lambda_j(y))
\text{ for every } j \in N \text{ such that } |\lambda_j(y)| < \delta_* \, .
\end{equation}
Indeed, assume that $j \in N$ and $|\lambda_j(y)| \le \delta_*$.
Fix any $\alpha$ such that $U_\alpha \ni y$.
Let $i = i_\alpha$ and $j_1 = j_1(x_\alpha) < \cdots < j_i = j_{i_\alpha}(x_\alpha)$.
Then, by \eqref{e.delta},
$$
\{ k \in N ;\; |\lambda_k(y)| < \delta_\alpha \} = \{ j_1, \dots, j_i \} \, .
$$
The index $j$ belongs to this set, because $\delta_* \le \delta_\alpha$.
It now follows from the definition of $\mathbf{v}_\alpha$ that
$$
D\lambda_j(y)(\mathbf{v}_\alpha (y)) = -\sgn(\lambda_j(y)) \, .
$$
Since this equality holds for every $\alpha$ such that $U_\alpha \ni y$,
we conclude that \eqref{e.it_s_a_trap} holds.

\medskip

From now on, let $i \in \{0,\dots,d\}$ be fixed.
We need to find a retraction 
\begin{equation}\label{e.g_i}
g_i \colon W_i \times I \to W_i \times \{0\} \widecup W_{i+1} \times I.
\end{equation}
If $i=d$ then $g_d(x,s)$ is simply $(x,0)$.
Let us assume that $i<d$.

For each $x \in W_i \setminus W_{i+1}$,  
the ODE $x'(t) = \mathbf{v}( x(t) )$ with initial value $x(0)=x$
has a unique solution $x(t) = \phi_i(x, t)$ taking values in $W_i \setminus W_{i+1}$.
Let $\tau_i(x) \in (0,\infty]$ be the supremum of the maximal interval where this solution is defined.
Note that:
\begin{equation}\label{e.hitting_time}
x \in W_i \setminus W_{i+1}, \ 
\sigma_{i+1}(x) < \delta_* \ \Rightarrow \ \tau(x) = \sigma_{i+1}(x);
\end{equation}
indeed once the quantity $\sigma_{i+1}$ is less than $\delta_*$, 
by \eqref{e.it_s_a_trap} it decreases with unit speed until $W_{i+1}$ is hit.

Property \eqref{e.hitting_time} has the following consequences:
\begin{itemize}
\item The map $\tau_i$ is continuous, and 
can be continuously extended to a map $\bar{\tau}_i$ on $W_i$
which vanishes on $W_{i+1}$.

\item If $x\in W_i \setminus W_{i+1}$ is such that 
$\tau_i(x) < \infty$ then 
$$
\pi_i(x) \coloneqq \phi_i(x,  \tau_i(x)-) = \lim_{t \to \tau_i(x)-} \phi_i(x, t)
$$
exists and belongs to $W_{i+1}$.
Moreover, the map $\pi_i$ is continuous.
\end{itemize}
It follows that the map 
$\bar{\phi}_i \colon W_i \times [0, \infty) \to W_i$
defined by
$$
\bar{\phi}_i (x,t) \coloneqq
\begin{cases}
	x			&\text{if $x \in W_{i+1}$ or $t=0$,} \\
	\phi_i(x,t)	&\text{if $x \not\in W_{i+1}$ and $0 < t < \tau_i(x)$,} \\
	\pi_i(x)	&\text{if $x \not\in W_{i+1}$ and $t \ge \tau_i(x)$}
\end{cases}
$$
is continuous.
(This map can be viewed as the positive-time flow
generated by the vector field $\mathbf{v}$ on $W_i$
and with $W_{i+1}$ as an absorbing barrier.)

Define a continuous map $\bar{g}_i \colon W_i \times I \times I \to W_i \times I$ by
\begin{equation}\label{e.deformation}
\bar{g}_i (x,s,t) \coloneqq \left( \bar{\phi}_i \big(x, \min\{\bar{\tau}_i(x),s,t\} \big), s - \min\{\bar{\tau}_i(x),s,t\} \right) \, .
\end{equation}
(This map can be viewed as the flow
generated by the vector field $(\mathbf{v},-1)$ on $W_i \times I$
and with $W_i \times \{0\} \cup W_{i+1}\times I$ as an absorbing barrier.)
Then the map $g_i \coloneqq \bar{g}_i (\mathord{\cdot},\mathord{\cdot},1)$ is 
the desired retraction \eqref{e.g_i}.
This proves that $(W_i, W_{i+1})$ has the homotopy extension property.
\end{proof}

\begin{scholium}\label{scholium}
The retractions $g_i \colon W_i \times I \to W_i \times \{0\} \widecup W_{i+1} \times I$
constructed in the proof of Lemma~\ref{l.reg_strat_fine}
have the following property:
if $g_i(x,s) = (x',s')$ then $\cL(x') \supset \cL(x)$.
\end{scholium}

\begin{proof}
We will make use of the map $\bar{g}_i$ defined by \eqref{e.deformation},
which is actually a deformation retraction.
Let $(x,s) \in W_i \times I$ and let $(x',s') = g_i(x,s)$.
Assume that $i<d$ and $(x,s) \not\in W_i \times \{0\} \widecup W_{i+1} \times I$, otherwise there is nothing to prove.
For $t\in I$, let $\xi_i(x,s,t)$ be first coordinate of $\bar{g}_i(x,s,t)$.
Then $\xi_i(x,s,t) \in W_i \setminus W_{i+1}$ for every $t \in [0, t_*)$, where $t_*\coloneqq\min\{\bar{\tau}_i(x),s\} > 0$.
By connectedness we conclude that $\cM(\xi_i(x,s,t))$ is independent of $t \in [0, t_*)$.
It follows that
$$
\cM (\xi_i(x,s,t_*) ) \supset \cM ( \xi_i(x,s,0) ) \, , \quad \text{that is,}\quad
\cM(x') \supset \cM(x) \, .
$$
A similar argument shows that $\ell^+(x') \ge \ell^+(x)$ and  $\ell^-(x') \ge \ell^-(x)$.
Therefore $\cL(x') \supset \cL(x)$, as announced. 
\end{proof}

\subsection{End of the proof}

\begin{lemma}\label{l.reg_strat_2}
If $K$ is an embedded $d$-dimensional disk with the transverse $N(K)$-hits property
then it is regular. 
\end{lemma}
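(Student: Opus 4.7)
The plan is to deduce both conditions for regularity of $K$ from Lemma~\ref{l.reg_strat_fine}. The first condition $X_{d+1} = \emptyset$ is immediate: since $\cL(x) \subset \cM(x)$ for every $x \in X$, we have $X_i \subset W_i$ for all $i$, so in particular $X_{d+1} \subset W_{d+1} = \emptyset$ by Lemma~\ref{l.reg_strat_fine}.

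For the homotopy extension property of $(K_i, K_{i+1})$, I plan to re-use the vector field $\mathbf{v}$, the flow $\bar\phi_j$, and the hitting-time functions $\bar\tau_j$ constructed in the proof of Lemma~\ref{l.reg_strat_fine}, but with a modified stopping rule: the trajectory is stopped as soon as $\#\cL$ strictly increases (equivalently, as soon as the flow enters the coarse stratum $K_{i+1}$). Two ingredients justify this. First, by the Scholium, the flow preserves $\cL$ monotonically, so trajectories starting in $X_i$ stay in $X_i$. Second, the flow respects $K$: at every point $y \in \partial K$ one has $0 \in \cM(y)$, and on any neighborhood $U_\alpha$ intersecting $\partial K$ we must have $0 \in \{j_1(x_\alpha), \ldots, j_{i_\alpha}(x_\alpha)\}$ (otherwise the inequality \eqref{e.delta} forces $|\lambda_0|$ bounded away from zero on $U_\alpha$), so the corresponding coordinate of $\mathbf{u}_{i_\alpha}$ vanishes on $\{\lambda_0 = 0\}$ and $\mathbf{v}_\alpha$ is tangent to $\partial K$ there. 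Thus trajectories from $\interior K$ can only leave $\interior K$ by entering $\partial K$, so $K$ is flow-invariant, and combined with the Scholium so is $K_i$.

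With these in hand, the retraction $K_i \times I \to K_i \times \{0\} \cup K_{i+1} \times I$ is obtained by iteratively concatenating the modified retractions across the fine strata $W_j$ visited by the trajectory: starting from $(x,s)$ in a fine stratum $W_j \setminus W_{j+1}$, apply the modified $g_j$; if we have not yet entered $K_{i+1}$ but only landed in $W_{j+1} \cap X_i$, repeat with the corresponding modification of $g_{j+1}$, and so on. Since $W_{2m(K)+1} = \emptyset$, only finitely many iterations occur. The resulting map fixes every point of $K_{i+1} \times I$ (because there $\#\cL$ is already $\geq i+1$ and no flow is applied) and retracts $K_i \times \{0\}$ to itself trivially, so it is the desired retraction.

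The main obstacle I anticipate is verifying continuity of this concatenated retraction, and in particular of the hitting time to $K_{i+1}$. I expect this to follow from arguments analogous to those used for the hitting time $\bar\tau_j$ to $W_{j+1}$ in the proof of Lemma~\ref{l.reg_strat_fine}, together with the fact that $K_{i+1}$ is closed, that $\#\cL$ is lower semicontinuous (by the same argument as in Lemma~\ref{l.loc_const}), and Lemma~\ref{l.frontier}, which controls how $K$ sits inside the fine stratification precisely at the interface where the iterative construction passes from one $W_j$ to the next.
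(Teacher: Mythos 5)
Your proposal takes essentially the same route as the paper: deduce $X_{d+1}=\emptyset$ from $W_{d+1}=\emptyset$, and build the retraction for the pair $(K\cap X_j, K\cap X_{j+1})$ by concatenating the fine retractions $g_i$ of Lemma~\ref{l.reg_strat_fine}, kept inside $K\cap X_j$ by the monotonicity of Scholium~\ref{scholium} (your explicit $\partial K$-tangency argument via \eqref{e.delta} supplies the preservation of $K$, which the paper attributes tersely to the Scholium), with Lemma~\ref{l.frontier} giving continuity at the interfaces. The paper formalizes your concatenation as a finite composition of maps $h_i$ equal to $g_i$ off the absorbed set $K_j\times\{0\}\cup K_{j+1}\times I$ and to the identity on it, so the continuity you flag as the main obstacle reduces, exactly via Lemma~\ref{l.frontier}, to the two pieces agreeing where their closures meet, with no separate analysis of a hitting time to $K_{j+1}$ required.
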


\begin{proof}
Apply Lemma~\ref{l.reg_strat_fine} to the disk $K$ and the associated
fine dynamical stratification $(W_i)$.
Then $X_{d+1} \subset W_{d+1} = \emptyset$,
which is the first regularity property that we need to check. 

For each $i \in \{0, \dots, d\}$, Lemma~\ref{l.reg_strat_fine} 
gives us a retraction
$$
g_i \colon W_i \times I \to W_i \times \{0\} \widecup W_{i+1} \times I \, .
$$
Denote $K_i \coloneqq K \cap X_i$.

Let $j \in \{0,1,\dots, d\}$ be fixed.
We will explain how to find a retraction 
\begin{equation}\label{e.wanted}
r \colon K_j \times I \to K_j \times \{0\} \widecup K_{j+1} \times I \, .
\end{equation}
First notice that, for any $i$,
\begin{equation}\label{e.fit}
g_i \left( (K_j \cap W_i) \times I \right) \subset K_j \times \{0\} \widecup (K_j \cap W_i) \times I \, .
\end{equation}
Indeed this follows from the property of $g_i$
provided by the Scholium~\ref{scholium}.
Define a nested sequence of closed sets 
$$
K_j \times I = Q_j \supset Q_{j+1} \supset \cdots \supset Q_d \supset Q_{d+1} = K_j \times \{0\} \widecup K_{j+1} \times I \, .
$$
by 
$$
Q_i \coloneqq K_j \times \{0\} \widecup \left( K_{j+1} \cup (K_j \cap W_i) \right) \times I \, .
$$
For each $i \ge j$, the set
$$
Q_i \setminus Q_{d+1} = (K_j \cap W_i \setminus K_{j+1}) \times (I \setminus \{0\});
$$
is contained in $(K_j \cap W_i) \times I$;
therefore by \eqref{e.fit} we can define a map 
$$
h_i \colon Q_i \to Q_{i+1} \quad \text{by} \quad
h_i = 
\begin{cases}
	\id &\text{on } Q_{d+1} \, , \\
	g_i &\text{on } Q_i \setminus Q_{d+1} \, .
\end{cases}
$$
Notice that
\begin{align*}
\overline{Q_i \setminus Q_{d+1}} \widecap Q_{d+1} 
&\enspace{\subset}\enspace \left( \, \overline{W_i \setminus K_{j+1}} \times I \right) \widecap Q_{d+1} \\
&\enspace{\subset}\enspace W_i \times \{0\} \widecup \left( \, \overline{W_i \setminus K_{j+1}} \cap K_{j+1} \right) \times I  \\
&\enspace{\subset}\enspace W_i \times \{0\} \widecup W_{i+1} \times I \quad \text{(by Lemma~\ref{l.frontier}),}
\end{align*}
and so $g_i = \id$ on these sets.
It follows that $h_i$ is continuous.

The desired retraction \eqref{e.wanted} is 
$$
r \coloneqq  h_{d+1} \circ h_d \circ \cdots \circ h_{i+1} \circ h_i \colon Q_i \to Q_{d+1}  \, .
$$
This shows the regularity of the dynamical stratification $(X_i)$.
\end{proof}

\begin{proof}[Proof of Theorem~\ref{t.regularity}]
Given any point $x \in X$ and any open neighborhood $V \ni x$,
let $K_1$, $K_2 \subset V$ be embedded $d$-dimensional disks containing $x$ in its interior,
with $K_1 \subset \interior K_2$.
Let $N = N(K_1)$.
By Lemma~\ref{l.transverse_hits}, there exists a 
disk $K$ with the transverse $N$-hits property such that 
$K \mathbin{\vartriangle} K_2 \subset V \setminus K_1$.
Note that $x \in K \subset V$
and so the sets $K$ obtained in this way form a basis of neighborhoods of $x$.
Moreover, $K \supset K_1$
and in particular $N(K) \subset N(K_1)$.
So $K$ also has the transverse $N(K)$-hits property,
and therefore it is a regular set by Lemma~\ref{l.reg_strat_2}.
\end{proof}


\end{document}